
\documentclass[12pt,twoside,reqno]{amsart}
\usepackage[latin1]{inputenc}
\usepackage[italian,english]{babel}
\usepackage{amssymb,latexsym}
\usepackage{amsmath,amsfonts,amsthm}
\usepackage{paralist}
\usepackage{color}

\setlength{\textwidth}{14.00 cm}

\numberwithin{equation}{section}

\pagestyle{plain}


\newcommand{\R}{\mathbb{R}}
\newcommand{\N}{\mathbb{N}}

\newcommand{\J}{\mathcal{J}}
\newcommand{\I}{\mathcal{I}}
\newcommand{\s}{\sharp}

\DeclareMathOperator{\dive}{div}

\newtheorem{lem}{Lemma}
\newtheorem{thm}{Theorem}

\newtheorem{defn}{Definition} 
\theoremstyle{remark}
\newtheorem{remark}{Remark}



\begin{document}
\title{Ground states solutions for a non-linear equation involving a pseudo-relativistic Schr\"odinger operator}
\author[Vincenzo Ambrosio]{Vincenzo Ambrosio}

\address{%
Dipartimento di Matematica e Applicazioni\\ Universit\`a degli Studi "Federico II" di Napoli\\
via Cinthia, 80126 Napoli, Italy}

\email{vincenzo.ambrosio2@unina.it}

\subjclass{35J60, 35Q55, 35S05}

\keywords{Ground states, Schr\"odinger equation, Extension method}

\date{}

\begin{abstract}
In this paper we prove the existence, regularity and symmetry of a ground state for a nonlinear equation in the whole space, involving a pseudo-relativistic Schr\"odinger operator.
\end{abstract}

\maketitle

\section{Introduction}

Recently, the study of fractional and nonlocal operators of elliptic type has attracted the attentions of many mathematicians. These operators arises in many different areas of research such as optimization, finance, minimal surfaces, phase transitions, quasi-geostrophic flows, crystal dislocation, anomalous diffusion, conservation laws and ultra-relativistic limits of quantum mechanics. For more details and applications we refer to \cite{bertoin, BKW, CSM, CSS, CV, CT, DPV, FJLL, FL1, FL2, FQT, silvestre, SV, toland} and references therein.

\noindent
In this paper we consider the following nonlinear fractional equation 
\begin{equation}\label{P}
[(-\Delta+m^{2})^{s}-m^{2s}]u+\mu u=|u|^{p-2}u \mbox{ in } \R^{N}
\end{equation}
where $s\in (0,1)$, $N>2s$, $p\in (2, \frac{2N}{N-2s})$, $m\geq 0$ and $\mu>0$.


The fractional operator 
\begin{equation}\label{operator}
(-\Delta+m^{2})^{s}
\end{equation}
which appears in (\ref{P}) is defined in the Fourier space by setting 
\begin{equation*}
\mathcal{F}(-\Delta+m^{2})^{s}u(k)=(|k|^{2}+m^{2})^{s} \mathcal{F}u(k)
\end{equation*}
or via Bessel potential, that is for every $u\in C^{\infty}_{c}(\R^{N})$
$$
(-\Delta+m^{2})^{s}u(x)=c_{N,s} m^{\frac{N+2s}{2}} P.V. \int_{\R^{N}} \frac{u(x)-u(y)}{|x-y|^{\frac{N+2s}{2}}} K_{\frac{N+2s}{2}} (m|x-y|) dy + m^{2s} u(x)
$$
for every $x\in \R^{N}$: see \cite{FF}. Here P.V. stands for the Cauchy principal value, 
$$
c_{N,s} = 2^{-\frac{N+2s}{2} +1} \pi^{-\frac{N}{2}} 2^{2s} \frac{s(1-s)}{\Gamma(2-s)}
$$
and $K_{\nu}$ denotes the modified Bessel function of the second kind with order $\nu$. 

When $s=1/2$ the operator (\ref{operator}) has a clear meaning in quantum mechanic: it corresponds to the free Hamiltonian of a free relativistic particle of mass $m$.
We remind that the study of $\sqrt{-\Delta+m^{2}}$ has been strongly influenced by papers  \cite{LY1, LY2}  of Lieb and Yau on the stability of relativistic matter. For  more recent works about this topic one can see \cite{A1, ChS, CN, FL, FJLL, FL2}.\\
Let us point out that the operator (\ref{operator}) has a deep connection with the Stochastic Process theory: in fact $(-\Delta+m^{2})^{s}-m^{2s}$ is an infinitesimal generator of a Levy process called the $2s$-stable relativistic process: see \cite{CMS, ryznar}.

In the present paper we are interested in the study of the ground states of (\ref{P}).
Such problems are motivated in particular by the search for certain kinds of solitary waves (stationary states) in nonlinear equations of the Klein-Gordon or Schr\"odinger
type.\\
In the celebrated papers \cite{BL1, BL2} Berestycki and Lions studied the existence of ground state of the following problem
\begin{equation}\label{BLP}
\left\{
\begin{array}{ll}
-\Delta u =g(u)&\mbox{ in }  \R^{N} \\
u\in H^{1}(\R^{N})  &\mbox{ and } u\not\equiv 0
\end{array}
\right.
\end{equation}
where $N\geq 3$, $g$ is a real continuous function such that 
\begin{enumerate}
\item $\lim_{t\rightarrow 0} \frac{g(t)}{t}=-\mu<0$;
\item $\lim_{t\rightarrow +\infty} \frac{g(t)}{t^{\frac{N+2}{N-2}}} \leq 0$;
\item there exists $t_{0}>0$ such that $G(t_{0})=\int_{0}^{t_{0}} g(z) dz>0$.
\end{enumerate}
In \cite{BL1}, solutions are constructed by solving the minimization problem
$$
\inf\Bigl \{ \int_{\R^{N}}|\nabla u|^{2} dx : \int_{\R^{N}} G(u) dx=1 \Bigr\}.
$$
Then a rescaling from $u$ to a suitable $ u(t^{-1}x)$ produces a solution of (\ref{BLP}) by absorbing the Lagrange multiplier. 

More recently, in \cite{DPV} authors studied, in a similar way, a non-local version of the above problem with a particular nonlinearity:  
\begin{equation}\label{DPV}
\left\{
\begin{array}{ll}
(-\Delta)^{s}u =-\mu u+|u|^{p-2}u &\mbox{ in }  \R^{N} \\
u\in H^{s}(\R^{N})  &\mbox{ and } u\not\equiv 0
\end{array}.
\right.
\end{equation}
Our aim is to prove that their result holds again when we replace $(-\Delta)^{s}$ by $(-\Delta+m^{2})^{s}-m^{2s}$. 
Unfortunately the method used in \cite{DPV} does not work in the case of the Bessel operator (\ref{operator}).
The most important difference between $(-\Delta)^{s}$ and (\ref{operator}) is that the first has some scaling properties that the latter does not have. 
In fact, it is not hard to check that one can solve the minimum problem
$$
\inf\Bigl \{ \int_{\R^{N}} |(-\Delta+m^{2})^{s/2}u|^{2} dx : \int_{\R^{N}} G(u) dx=1 \Bigr\}
$$
in the space of radially symmetric functions. But, in this case, we are not able to absorb the  Lagrange multiplier because of the lack of scaling invariance for the Bessel fractional operator. 

It is worth remembering that, by using different variational techniques and a Pohozaev type identity, Chang and Wang \cite{CW} proved the existence of a ground state to the problem (\ref{DPV}) with a more general nonlinearity. 
In a future work we will try to extend their result to the more general operator $(-\Delta+m^{2})^{s}-m^{2s}$.
 
Our first result can be stated as follows
\begin{thm}
Let $m>0$ and $\mu>0$. Then there exists a positive ground state solution $u\in H_{m}^{s}(\R^{N})$ to (\ref{P}) which is radially symmetric.
\end{thm}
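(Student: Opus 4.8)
\noindent\emph{Sketch of the strategy.} The plan is to produce the ground state as a minimizer of the energy over the Nehari manifold, carried out in the subspace of radially symmetric functions so as to recover the compactness that the lack of dilation invariance of \eqref{operator} destroys.

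First I would fix the functional framework. Since $(|k|^{2}+m^{2})^{s}\geq m^{2s}$ and $(|k|^{2}+m^{2})^{s}-m^{2s}\sim|k|^{2s}$ as $|k|\to\infty$, for $\mu>0$ the quadratic form
$$
[u]^{2}:=\int_{\R^{N}}\bigl[(|k|^{2}+m^{2})^{s}-m^{2s}+\mu\bigr]\,|\mathcal{F}u(k)|^{2}\,dk
$$
is a norm on $H^{s}_{m}(\R^{N})$ equivalent to the standard $H^{s}(\R^{N})$ one, so that
$$
I(u)=\frac{1}{2}\,[u]^{2}-\frac{1}{p}\int_{\R^{N}}|u|^{p}\,dx
$$
is a $C^{1}$ functional whose critical points solve \eqref{P} and which has the mountain pass geometry thanks to the subcritical embedding $H^{s}_{m}(\R^{N})\hookrightarrow L^{p}(\R^{N})$. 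I would then work with the Nehari manifold $\mathcal{N}=\{u\neq0:[u]^{2}=\|u\|_{L^{p}}^{p}\}$ and the level $c=\inf_{\mathcal{N}}I$; every nonzero function has a unique positive multiple on $\mathcal{N}$, the Sobolev inequality forces $[u]^{2}\geq\delta>0$ there, hence $c=(\tfrac12-\tfrac1p)\inf_{\mathcal{N}}[u]^{2}>0$ and minimizing sequences are bounded. It is convenient to rewrite $[u]^{2}$ as a weighted Dirichlet integral on $\R^{N+1}_{+}$ via the extension of \cite{FF}; I will use this local picture below.

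The heart of the proof is the attainment of $c$, and this is where the absence of scaling prevents the Berestycki--Lions / \cite{DPV} rescaling device. Instead I would minimize inside the closed subspace $H^{s}_{m,r}(\R^{N})$ of radial functions: since $[\,\cdot\,]$ is equivalent to the $H^{s}$ norm and radial $H^{s}(\R^{N})$ functions embed \emph{compactly} into $L^{p}(\R^{N})$ for $2<p<\tfrac{2N}{N-2s}$, a minimizing sequence $(u_{n})\subset\mathcal{N}\cap H^{s}_{m,r}(\R^{N})$ for $c$ has, up to a subsequence, $u_{n}\rightharpoonup u$ in $H^{s}_{m,r}(\R^{N})$ and $u_{n}\to u$ in $L^{p}(\R^{N})$; since $\|u_{n}\|_{L^{p}}^{p}=[u_{n}]^{2}\geq\delta$ the limit $u$ is nonzero. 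Scaling $u$ onto $\mathcal{N}$ and using the weak lower semicontinuity of $[\,\cdot\,]$ together with the $L^{p}$ convergence, I would conclude that $u$ itself lies on $\mathcal{N}$, attains $c$, and is a strong limit of $(u_{n})$. Because $\mathcal{N}$ is a natural constraint the corresponding Lagrange multiplier vanishes, so $u$ is a critical point of $I$ on $H^{s}_{m,r}(\R^{N})$; as $I$ is $O(N)$-invariant and $H^{s}_{m,r}(\R^{N})$ is its fixed-point set, the principle of symmetric criticality gives $I'(u)=0$ on all of $H^{s}_{m}(\R^{N})$, and minimality over $\mathcal{N}$ (which contains every nontrivial solution) makes $u$ a radial ground state of \eqref{P}.

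Finally, positivity: replacing $u$ by $|u|$ does not increase $[\,\cdot\,]$ (by the positivity of the Bessel kernel in the integral formula for \eqref{operator} together with $\bigl||a|-|b|\bigr|\leq|a-b|$) and keeps $\|u\|_{L^{p}}$ fixed, so $|u|$ is again a minimizer and one may take $u\geq0$, $u\not\equiv0$; then $u>0$ follows from the interior Harnack inequality plus a Hopf-type boundary estimate for the $A_{2}$-degenerate equation satisfied by the $s$-harmonic extension of $u$ (equivalently, from the positivity-improving resolvent of $(-\Delta+m^{2})^{s}-m^{2s}+\mu$ associated with the relativistic $2s$-stable semigroup), while a bootstrap on the extended problem gives regularity. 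The step I expect to be the main obstacle is exactly the compactness: on $\R^{N}$ minimizing sequences can drift off along translations and, unlike for $(-\Delta)^{s}$, one cannot absorb the Lagrange multiplier by a dilation; restricting to radial functions, exploiting the compact radial embedding and symmetric criticality is the way around it, with additional care required for the maximum principle because the weight $y^{1-2s}$ degenerates at $\{y=0\}$.
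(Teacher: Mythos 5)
Your overall architecture (Nehari minimization, compactness from the radial embedding, the $|u|$ trick plus a Hopf-type principle for positivity, bootstrap regularity) is close in spirit to the paper, which carries out the same program on the extended problem \eqref{fracvaldinoci} rather than directly on the Fourier-side quadratic form. But there is a genuine gap at the central point: you minimize over $\mathcal{N}\cap H^{s}_{m,r}(\R^{N})$ and then assert that the resulting radial critical point is a ground state ``because $\mathcal{N}$ contains every nontrivial solution.'' What your argument actually produces is a minimizer of $I$ over the \emph{radial} Nehari manifold, i.e.\ it attains $c_{rad}:=\inf_{\mathcal{N}\cap H^{s}_{m,r}}I$, and symmetric criticality makes it a solution; but a ground state must attain $c=\inf_{\mathcal{N}}I$, and a priori $c_{rad}>c$ is possible, in which case a non-radial solution of lower energy is not excluded. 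Your phrase ``a minimizing sequence $(u_n)\subset\mathcal{N}\cap H^{s}_{m,r}$ for $c$'' presupposes exactly the missing fact, namely that the full infimum $c$ admits radial minimizing sequences.

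The paper closes this gap with a rearrangement argument, and this is the one idea your sketch omits. Starting from an arbitrary minimizing sequence $(v_j)\subset\mathcal{N}_m$, it replaces the traces $u_j$ by their symmetric-decreasing rearrangements $\tilde u_j$, using the P\'olya--Szeg\H{o}-type inequality for the Bessel norm, inequality (\ref{decrearr}) from \cite{Sbessel}, which gives $|\tilde u_j|_{H^{s}_{m}}\le|u_j|_{H^{s}_{m}}$ while preserving all $L^{q}$ norms; re-extending and using Theorem \ref{thmterry} one gets radial competitors $\tilde v_j$ with $\J_m(\tilde v_j)\le 0$, and projecting back onto $\mathcal{N}_m$ with factors $t_j\le 1$ (here $p>2$ is used) yields a \emph{radial} minimizing sequence for the full level $c_m$. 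Only after this step does the compact embedding of Theorem \ref{compactradial} give a radial minimizer of $c_m$ itself, hence a genuine ground state; no symmetric criticality is then needed, since the minimizer is unconstrained-optimal on all of $\mathcal{N}_m$. To repair your proof you must either insert this rearrangement step (the kernel positivity you already invoke for the $|u|$ comparison is the natural entry point, via the Riesz rearrangement inequality for the decreasing Bessel kernel), or otherwise prove $c_{rad}=c$; without it the conclusion ``ground state'' is not justified.
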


The main difficulty of studying of problem (\ref{P}) is the non-local character of the involved operator. To overcome this difficulty, we use a technique very common in the recent literature: the Caffarelli-Silvestre extension method \cite{CS}. Such method consists to write a given nonlocal problem in a local way via the Dirichlet-Neumann map: this allow us to apply known variational techniques to these kind of problems; see for instance \cite{A2, CDDS,CTan, StingaTorrea}.
More precisely, for any $u\in H^{s}_{m}(\R^{N})$ there exists a unique $v\in H^{1}_{m}(\R^{N+1}_{+},y^{1-2s})$ weakly solving
\begin{equation*}
\left\{
\begin{array}{ll}
-\dive({y^{a} \nabla v})+m^{2}y^{a} v=0 &\mbox{ in }  \R^{N+1}_{+} \\
v(x,0)=u(x)   &\mbox{ on } \partial \R^{N+1}_{+} 
\end{array}
\right.
\end{equation*}
and such that
$$
\frac{\partial v}{\partial \nu^{a}}:=-\lim_{y \rightarrow 0} y^{a} \frac{\partial v}{\partial y}(x,y)=(-\Delta+m^{2})^{s} u(x) \mbox{ in } H_{m}^{-s}(\R^{N})
$$
where $a=1-2s\in (-1,1)$.

We will exploit this fact, and we will prove the existence, regularity results and qualitative properties of solutions to 
\begin{equation}\label{fracvaldinoci}
\left\{
\begin{array}{ll}
-\dive({y^{a} \nabla v})+m^{2}y^{a} v=0 &\mbox{ in }  \R^{N+1}_{+} \\
\frac{\partial v}{\partial \nu^{a}}=\kappa_{s}[m^{2s}u-\mu u+|u|^{p-2}u]   &\mbox{ on } \partial \R^{N+1}_{+} 
\end{array}.
\right.
\end{equation}

When we assume $m$ sufficiently small we are able to rediscover the result obtained by  \cite{DPV} (and \cite{CW}).
In fact, we are able to pass to the limit in (\ref{P}) as $m\rightarrow 0$ and we find a nontrivial solution to
\begin{equation*}
(-\Delta)^{s}u+\mu u=|u|^{p-2}u \mbox{ in } \R^{N}.
\end{equation*} 
More precisely we obtain
\begin{thm}
There exists a nontrivial ground state solution $u\in H^{s}(\R^{N})$ to (\ref{DPV})  such that $u$ is radially symmetric.
\end{thm}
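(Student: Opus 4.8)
The plan is to obtain the ground state of the limiting fractional problem (\ref{DPV}) as a limit of the ground states $u_m$ of (\ref{P}) provided by Theorem 1, taking $m=m_n\to 0^+$. First I would record the variational framework: (\ref{P}) is the Euler--Lagrange equation of the functional
\[
I_m(u)=\frac12\int_{\R^N}\bigl(|(-\Delta+m^2)^{s/2}u|^2-m^{2s}u^2+\mu u^2\bigr)\,dx-\frac1p\int_{\R^N}|u|^p\,dx
\]
on $H^s_m(\R^N)$, and the ground state value is the mountain-pass / Nehari level $c_m=\inf_{\mathcal N_m}I_m$. The key quantitative point is that the quadratic form $\int(|(-\Delta+m^2)^{s/2}u|^2-m^{2s}u^2)\,dx$ converges, as $m\to 0$, to the Gagliardo form $[u]_{H^s}^2=\int(-\Delta)^{s/2}u\cdot u$, monotonically from above in an appropriate sense; this can be seen on the Fourier side from $(|k|^2+m^2)^s-m^{2s}\to|k|^{2s}$ together with the elementary inequality $(|k|^2+m^2)^s-m^{2s}\le|k|^{2s}$ (concavity of $t\mapsto t^s$). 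Hence for each fixed $u\in H^s(\R^N)\cap\{u\not\equiv0\}$ one has $I_m(u)\to I_0(u)$ and the mountain-pass geometry is uniform for small $m$, which gives $\limsup_{m\to0}c_m\le c_0$.

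Next I would derive uniform bounds. From $I_m(u_m)=c_m$, $I_m'(u_m)=0$ one gets, as usual, $\|u_m\|_{H^s_m}$ bounded and $c_m$ bounded above and below by positive constants, uniformly for $m$ small; equivalently the Gagliardo seminorms $[u_m]_{H^s}$ and the $L^2$, $L^p$ norms are controlled. Since each $u_m$ is positive and radially symmetric (by Theorem 1), I would invoke the compact embedding of $H^s_{rad}(\R^N)$ into $L^p(\R^N)$ for $p\in(2,\frac{2N}{N-2s})$ (radial lemma of Strauss type, valid here because the $m$-dependent norm is equivalent, for $m$ in a bounded set, to the standard $H^s$ norm with constants that stay bounded — this needs a small argument since the constants could degenerate as $m\to0$, but here the Gagliardo part alone already dominates). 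Thus, up to a subsequence, $u_{m_n}\rightharpoonup u$ in $H^s(\R^N)$, $u_{m_n}\to u$ strongly in $L^p$, and a.e. Passing to the limit in the weak formulation of (\ref{P}) — using $m_n^{2s}u_{m_n}\to0$ in $H^{-s}$ and the $L^p$ convergence for the nonlinear term — shows $u$ solves $(-\Delta)^su+\mu u=|u|^{p-2}u$.

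Then I would show $u\not\equiv0$ and that it is a ground state. Nontriviality follows because $\int|u_{m_n}|^p\,dx$ is bounded below away from zero (consequence of $I_{m_n}'(u_{m_n})u_{m_n}=0$ and the lower bound on $c_{m_n}$), and $L^p$-convergence transfers this to $u$. Positivity and radial symmetry pass to the limit from the $u_{m_n}$. Finally, $I_0(u)\le\liminf I_{m_n}(u_{m_n})=\liminf c_{m_n}\le c_0$ by weak lower semicontinuity of the Gagliardo seminorm together with $m_n^{2s}\int u_{m_n}^2\to0$ and strong $L^p$ convergence; since $u$ is a nontrivial solution of (\ref{DPV}) we also have $I_0(u)\ge c_0$, whence $u$ is a ground state. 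The main obstacle is the compactness step: one must ensure the radial Sobolev compact embedding is uniform in $m$ near $0$, i.e. that no mass escapes to infinity as both $n\to\infty$ and $m_n\to0$; I would handle this by writing everything in terms of the $m$-independent Gagliardo seminorm $[\,\cdot\,]_{H^s}$ (which is what the extension problem (\ref{fracvaldinoci}) controls in the limit) and checking that the extra term $-m^{2s}\int u^2$ is a harmless lower-order perturbation on the bounded set where the $u_{m_n}$ live.
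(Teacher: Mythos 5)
Your proposal is correct in substance, but it follows a genuinely different route from the paper. The paper never returns to the Fourier side: it works with the extended minimizers $v_{m}$ in $\R^{N+1}_{+}$, proves the uniform bounds $\lambda\leq c_{m}\leq\delta$ by hand (the upper bound via the explicit test function $w(x,y)=v_{0}(x)/(1+y)$, the lower bound from $\J_{m}(v_{m})=0$ plus the trace/Sobolev inequality), compensates the degeneration of the $m^{2}\iint y^{1-2s}v^{2}$ term with a weighted local estimate of $\|v\|_{L^{2}(\R^{N}\times[0,\varepsilon],y^{1-2s})}$ in terms of $|v(\cdot,0)|_{L^{2}}$ and $\|\partial_{y}v\|$, extracts weak limits of $v_{m}$ and $\nabla v_{m}$ plus strong $L^{q}$ convergence of the traces, and passes to the limit in the extended weak formulation using cutoffs $\psi_{R}(y)$ and then $R\to\infty$; radial symmetry and positivity of the limit are obtained by citing \cite{FQT}. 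You instead stay on the fractional side and exploit the elementary symbol inequality $(|k|^{2}+m^{2})^{s}\leq |k|^{2s}+m^{2s}$, which buys you three things: the comparison $c_{m}\leq c_{0}$ without any explicit test function; a direct passage to the limit in the weak formulation of (\ref{P}), since the symbol error is bounded by $m^{2s}$ uniformly in $k$ so the correction term vanishes against the bounded $L^{2}$ norms; and, via lower semicontinuity together with $c_{m}\leq c_{0}$, the conclusion that the limit actually attains the ground-state level of (\ref{DPV}) --- a point the paper's Section 6 argument does not address (its theorem there only produces a nontrivial weak solution), so your scheme is arguably more faithful to the statement of Theorem 2; you also inherit radial symmetry directly from the radial $u_{m}$ rather than quoting \cite{FQT}. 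Two small points you should make explicit: first, that the ground state of Theorem 1 attains the \emph{fractional} Nehari level, which follows because the $m$-harmonic extension realizes equality in the trace inequality of Theorem \ref{thmterry}, so the extended and fractional Nehari infima coincide; second, the uniform coercivity you invoke (the quadratic form plus $\mu\int u^{2}$ controls the full $H^{s}$ norm independently of $m$) holds precisely under a smallness restriction such as $m^{2s}\leq\mu/2$, which is the same restriction $0<m<(\mu/2)^{1/2s}$ imposed in the paper.
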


\section{preliminaries}
In this section we collect some notations e basic facts we will use later.
Let $s\in (0,1)$ and $m>0$. 

Let
$$
\R^{N+1}_{+}=\{(x,y)\in \R^{N+1}:y>0 \}.
$$
We use the notation $|(x,y)|=\sqrt{|x|^{2}+y^{2}}$ to denote the euclidean norm in $\R^{N+1}_{+}$.
Let $R>0$. We denote by 
\begin{align*}
&B^{+}_{R}=\{(x,y)\in \R^{N+1}_{+}: |(x,y)|<R\}, \\
&\Gamma_{R}^{0}=\{(x,0)\in \partial \R^{N+1}_{+}: |x|<R \}
\end{align*}
and 
$$
\Gamma_{R}^{+}=\{(x,y)\in \overline{\R^{N+1}_{+}}: |(x,y)|= R \}.
$$
Let $q\in [1,\infty]$.
We denote by $|\cdot|_{L^{q}(A)}$ the $L^{q}$-norm in $A\subset \R^{N}$ and $||\cdot||_{L^{q}(B)}$ the $L^{q}$-norm in $B\subset \R^{N+1}_{+}$, respectively.

We define $H^{s}_{m}(\R^{N})$ as the completion of $C^{\infty}_{c}(\R^{N})$ with respect to the norm
$$
|u|^{2}_{H^{s}_{m}(\R^{N})}:= \int_{\R^{N}} (|k|^{2}+m^{2})^{s} |\mathcal{F}u(k)|^{2} dk<\infty,
$$
where $\mathcal{F}u(k)$ is the Fourier transform of $u$. When $m=1$ we write $H^{s}(\R^{N})= H^{s}_{1}(\R^{N})$ and $|\cdot|_{H^{s}(\R^{N})}= |\cdot|_{H^{s}_{1}(\R^{N})} $. 

We denote by $H^{1}_{m}(\R^{N+1}_{+},y^{1-2s})$ the completion of $C^{\infty}_{c}(\overline{\R^{N+1}_{+}})$ with respect to the norm
$$
||v||_{H^{1}_{m}(\R^{N+1}_{+},y^{1-2s})}^{2}:=\iint_{\R^{N+1}_{+}} y^{1-2s} (|\nabla v|^{2}+m^{2} v^{2}) dx dy<\infty.
$$
As before, when $m=1$, we use the notation $H^{1}(\R^{N+1}_{+},y^{1-2s})=H^{1}_{1}(\R^{N+1}_{+},y^{1-2s})$ and $||\cdot ||_{H^{1}(\R^{N+1}_{+},y^{1-2s})}= ||\cdot ||_{H^{1}_{1}(\R^{N+1}_{+},y^{1-2s})}$. 
 
It is possible to prove (see \cite{FF}) that it holds the following result:
\begin{thm}\label{thmterry}
There exists a trace operator $Tr: H^{1}_{m}(\R^{N+1}_{+},y^{1-2s}) \rightarrow H^{s}_{m}(\R^{N})$ such that
\begin{enumerate}
\item $Tr(v)=v(x,0)$ for any $v\in C^{\infty}_{c}(\overline{\R^{N+1}_{+}})$ \\
\item $\kappa_{s}|Tr(v)|_{H^{s}_{m}(\R^{N})}^{2}\leq ||v||_{H^{1}_{m}(\R^{N+1}_{+},y^{1-2s})}^{2}$ for any $v\in H^{1}_{m}(\R^{N+1}_{+},y^{1-2s})$,
where $\kappa_{s}=2^{1-2s}\frac{\Gamma(1-s)}{\Gamma(s)}$.

\noindent
Equality holds in $(2)$ for some function $v\in H^{1}_{m}(\R^{N+1}_{+},y^{1-2s})$ if and only if $v$ is a weak solution to 
$$
-\dive({y^{1-2s} \nabla v})+m^{2}y^{1-2s} v=0 \mbox{ in  } \R^{N+1}_{+}.
$$
\end{enumerate}
\end{thm}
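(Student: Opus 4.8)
The strategy is to diagonalise the quadratic form defining $\|\cdot\|_{H^1_m(\R^{N+1}_+,y^{1-2s})}$ by means of the partial Fourier transform in the $x$-variable: this turns the $(N+1)$-dimensional trace inequality into a one-parameter family of one-dimensional weighted minimisation problems on the half-line, each of which can be solved explicitly using the modified Bessel function $K_s$.

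First I would establish $(2)$ for $v\in C^\infty_c(\overline{\R^{N+1}_+})$ and then define $Tr$ on the whole space $H^1_m(\R^{N+1}_+,y^{1-2s})$ by density (so that $(1)$ holds by construction). Denoting by $\hat v(k,y)$ the Fourier transform of $v$ with respect to $x$, Plancherel's theorem gives
$$
\|v\|_{H^1_m(\R^{N+1}_+,y^{1-2s})}^2=\int_{\R^N}\int_0^\infty y^{1-2s}\bigl(|\partial_y\hat v(k,y)|^2+(|k|^2+m^2)|\hat v(k,y)|^2\bigr)\,dy\,dk .
$$
Hence it suffices to prove, for each fixed $k$ and with $\rho:=\sqrt{|k|^2+m^2}$, the one-dimensional inequality
$$
\int_0^\infty y^{1-2s}\bigl(|\psi'(y)|^2+\rho^2|\psi(y)|^2\bigr)\,dy\ \ge\ \kappa_s\,\rho^{2s}\,|\psi(0)|^2
$$
for all finite-energy $\psi$ with $\psi(0)=\hat v(k,0)=\widehat{Tr(v)}(k)$. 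This functional is strictly convex, so the infimum over the affine constraint is attained at the unique finite-energy solution of the Euler--Lagrange equation $-(y^{1-2s}\psi')'+\rho^2 y^{1-2s}\psi=0$; after the rescaling $\psi(y)=\psi(0)\varphi(\rho y)$ one reduces to $-(t^{1-2s}\varphi')'+t^{1-2s}\varphi=0$ with $\varphi(0)=1$ and $\varphi(+\infty)=0$, whose solution is the Bessel profile $\varphi(t)=\tfrac{2^{1-s}}{\Gamma(s)}\,t^{s}K_{s}(t)$.

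The key step is evaluating the infimum. Integrating by parts and using the equation, the minimal one-dimensional energy equals $-\lim_{t\to 0^+}t^{1-2s}\varphi'(t)\varphi(t)$ (the boundary term at $+\infty$ vanishes because $K_s$ decays exponentially). Using $(t^{s}K_s(t))'=-t^{s}K_{s-1}(t)$, the identity $K_{s-1}=K_{1-s}$, and the small-argument asymptotics $t^{s}K_{s}(t)\to 2^{s-1}\Gamma(s)$ and $t^{1-s}K_{1-s}(t)\to 2^{-s}\Gamma(1-s)$, one finds $t^{1-2s}\varphi'(t)\varphi(t)\to -\kappa_s$, so the minimal energy is $\kappa_s$; undoing the rescaling multiplies it by $\rho^{2s}=(|k|^2+m^2)^{s}$, which gives the displayed one-dimensional inequality, with equality precisely when $\psi$ is a multiple of the Bessel profile. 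Integrating over $k\in\R^N$ and recalling the definition of $|\cdot|_{H^s_m(\R^N)}$ yields $\kappa_s|Tr(v)|_{H^s_m(\R^N)}^2\le\|v\|_{H^1_m(\R^{N+1}_+,y^{1-2s})}^2$. Finally, equality in the integrated inequality forces equality in the one-dimensional problem for a.e. $k$, i.e. $\hat v(k,\cdot)=\hat v(k,0)\,\varphi(\rho\,\cdot)$ for a.e. $k$; transforming back, this is exactly the statement that $v$ weakly solves $-\dive(y^{1-2s}\nabla v)+m^2y^{1-2s}v=0$ in $\R^{N+1}_+$, and conversely any such $v$ coincides, by uniqueness of the bounded solution of the transformed ODE, with the extension of $u=Tr(v)$ and therefore realises equality.

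I expect the main obstacle to be the careful treatment of the degenerate weight $y^{1-2s}$ near $y=0$: one has to show that a function of finite weighted energy admits a well-defined trace on $\{y=0\}$ (this uses $1-2s<1$, i.e. $s>0$), justify the integration by parts and the vanishing of the boundary terms at $0$ and $+\infty$, and verify that the explicit Bessel profile indeed lies in $H^1_m(\R^{N+1}_+,y^{1-2s})$ and attains the infimum. The remaining ingredients — Plancherel, strict convexity of the quadratic functional, and density of $C^\infty_c(\overline{\R^{N+1}_+})$ — are routine.
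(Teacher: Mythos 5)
Your proposal is correct: the partial Fourier transform in $x$ reduces the inequality to the one-dimensional weighted problem whose minimiser is the profile $\varphi(t)=\tfrac{2^{1-s}}{\Gamma(s)}t^{s}K_{s}(t)$, and your boundary-term computation with $(t^{s}K_{s})'=-t^{s}K_{s-1}$, $K_{s-1}=K_{1-s}$ and the small-$t$ asymptotics does yield exactly $\kappa_{s}=2^{1-2s}\Gamma(1-s)/\Gamma(s)$, with equality precisely for the Bessel extension, i.e. for weak solutions of $-\dive(y^{1-2s}\nabla v)+m^{2}y^{1-2s}v=0$. The paper itself does not prove this theorem but quotes it from Fall--Felli, and the argument given there is essentially the same Fourier--Bessel scheme you outline, so your route matches the source up to the routine density and weighted-trace technicalities you already flag.
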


As a consequence we can deduce
\begin{thm}\label{ext}
Let $u\in H_{m}^{s}(\R^{N})$. Then there exists a unique $v \in H^{1}_{m}(\R^{N+1}_{+}, y^{1-2s})$
 which solves 
\begin{equation}\label{extpb}
\left\{
\begin{array}{ll}
-\dive({y^{1-2s} \nabla v})+m^{2}y^{1-2s} v=0 &\mbox{ in }  \R^{N+1}_{+} \\
v=u  &\mbox{ on } \partial \R^{N+1}_{+} 
\end{array}.
\right.
\end{equation}
In particular 
$$
-\lim_{y \rightarrow 0} y^{1-2s} \frac{\partial v}{\partial y}(x,y)=\kappa_{s} (-\Delta+m^{2})^{s}u(x) \mbox{ in } H_{m}^{-s}(\R^{N}).
$$
\end{thm}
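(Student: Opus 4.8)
The plan is to obtain $v$ as the unique minimizer of the weighted Dirichlet energy subject to the trace constraint, and then to identify its conormal derivative by an explicit Fourier/Bessel computation.

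For existence and uniqueness I would work in the affine subspace
$$
\mathcal{V}_{u}:=\{v\in H^{1}_{m}(\R^{N+1}_{+},y^{1-2s}):Tr(v)=u\},
$$
which is nonempty because the trace operator of Theorem \ref{thmterry} is onto $H^{s}_{m}(\R^{N})$, a right inverse being the Poisson-type extension described below. On $\mathcal{V}_{u}$ the functional $J(v)=\tfrac12\|v\|_{H^{1}_{m}(\R^{N+1}_{+},y^{1-2s})}^{2}$ is coercive, strictly convex and weakly lower semicontinuous, hence attains its infimum at a unique point $v$. Writing the Euler--Lagrange equation, for every $\varphi\in H^{1}_{m}(\R^{N+1}_{+},y^{1-2s})$ with $Tr(\varphi)=0$ one gets
$$
\iint_{\R^{N+1}_{+}} y^{1-2s}\bigl(\nabla v\cdot\nabla\varphi+m^{2}v\varphi\bigr)\,dx\,dy=0,
$$
which is precisely the weak formulation of \eqref{extpb}. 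Uniqueness also follows directly: the difference $w$ of two solutions lies in $\mathcal{V}_{0}$ and, tested against itself, yields $\|w\|_{H^{1}_{m}(\R^{N+1}_{+},y^{1-2s})}=0$. Moreover, since $v$ solves the homogeneous equation, the equality case in Theorem \ref{thmterry}(2) gives $\|v\|_{H^{1}_{m}(\R^{N+1}_{+},y^{1-2s})}^{2}=\kappa_{s}|u|^{2}_{H^{s}_{m}(\R^{N})}$, confirming $v\in H^{1}_{m}(\R^{N+1}_{+},y^{1-2s})$.

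To read off the conormal derivative I would produce $v$ explicitly in the Fourier variable $k$ dual to $x$. Setting $\widehat{v}(k,y)=\widehat{u}(k)\,\theta\bigl(y\sqrt{|k|^{2}+m^{2}}\bigr)$ reduces \eqref{extpb} to the modified Bessel equation
$$
\theta''(t)+\frac{1-2s}{t}\,\theta'(t)-\theta(t)=0,\qquad \theta(0)=1,\quad \theta(+\infty)=0,
$$
whose bounded solution is $\theta(t)=c_{s}\,t^{s}K_{s}(t)$ with $c_{s}=2^{1-s}/\Gamma(s)$, so that $\theta(0)=1$. A change of variables $t=y\sqrt{|k|^{2}+m^{2}}$ gives
$$
y^{1-2s}\,\partial_{y}v(k,y)=(|k|^{2}+m^{2})^{s}\,\widehat{u}(k)\;t^{1-2s}\theta'(t),
$$
and, using $\tfrac{d}{dt}(t^{s}K_{s}(t))=-t^{s}K_{s-1}(t)$ together with the small-$t$ asymptotics of $K_{\nu}$, one finds $t^{1-2s}\theta'(t)\to-\kappa_{s}$ with $\kappa_{s}=2^{1-2s}\Gamma(1-s)/\Gamma(s)$. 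Hence $-\lim_{y\to0}y^{1-2s}\partial_{y}v=\kappa_{s}(|k|^{2}+m^{2})^{s}\widehat{u}$ in the Fourier variable, i.e.\ $\kappa_{s}(-\Delta+m^{2})^{s}u$. One then checks, by integrating by parts in $y$ in the weak formulation against test functions and using the energy bound on $v$, that this limit is attained in the dual space $H^{-s}_{m}(\R^{N})$ and that the map $u\mapsto-\lim_{y\to0}y^{1-2s}\partial_{y}v$ is bounded from $H^{s}_{m}(\R^{N})$ to $H^{-s}_{m}(\R^{N})$; a density argument identifies the variational minimizer with the function built from $\theta$, so the two descriptions of $v$ agree.

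I expect the main obstacle to be the rigorous justification of the boundary behaviour: showing that, for the abstract variational solution, $y^{1-2s}\partial_{y}v(\cdot,y)$ genuinely converges as $y\to0^{+}$ to a well-defined element of $H^{-s}_{m}(\R^{N})$, and that it coincides with the profile computed from the Bessel function. This requires combining the weak formulation with the explicit kernel representation (both available from \cite{FF}) and controlling the behaviour near $y=0$ uniformly when paired against $H^{s}_{m}$ test functions; the identification of the precise constant $\kappa_{s}$ then reduces to the standard asymptotics of $K_{s}$ and $K_{1-s}$ at the origin.
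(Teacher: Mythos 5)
Your argument is correct and follows essentially the same route the paper itself delegates to \cite{FF}: existence and uniqueness by minimizing the weighted energy over the trace-constrained affine subspace (equivalently, via the equality case of Theorem \ref{thmterry}), and identification of the Neumann datum through the Fourier-side Bessel profile $\theta(t)=2^{1-s}\Gamma(s)^{-1}t^{s}K_{s}(t)$, whose small-$t$ asymptotics produce exactly the constant $\kappa_{s}=2^{1-2s}\Gamma(1-s)/\Gamma(s)$. The remaining details you flag (uniform boundedness of $t^{1-2s}\theta'(t)$ and dominated convergence when pairing against $H^{s}_{m}$ test functions) are routine and consistent with the cited treatment.
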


Since $H^{s}_{m}(\R^{N})\subset L^{q}(\R^{N})$ for any $q\leq \frac{2N}{N-2s}$, we can prove that 
\begin{thm}
For any $v \in H_{m}^{1}(\R^{N+1}_{+}, y^{1-2s})$ and for any $q\in \Bigl[2, \frac{2N}{N-2s} \Bigr]$
\begin{align}\label{SI}
C_{q,s,N}|u|_{L^{q}(\R^{N})}^{2} &\leq  
\kappa_{s}\int_{\R^{N}} (|k|^{2}+m^{2})^{s} |\mathcal{F}u(k)|^{2} dk \nonumber \\
&\leq \iint_{\R^{N+1}_{+}} y^{1-2s} (|\nabla v|^{2}+m^{2} v^{2}) dx dy
\end{align}
where $u(x)=v(x,0)$ is the trace of $v$ on $\partial \R^{N+1}_{+}$.
\end{thm}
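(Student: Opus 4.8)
The plan is to read the claimed estimate \eqref{SI} as the concatenation of two facts that are already available: the right-hand (outer) inequality is exactly the trace inequality of Theorem \ref{thmterry}, and the left-hand (inner) inequality is the Sobolev embedding $H^{s}_{m}(\R^{N})\hookrightarrow L^{q}(\R^{N})$.

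First I would dispose of the right-hand inequality. Since by definition $u=Tr(v)$ and
$$
\int_{\R^{N}}(|k|^{2}+m^{2})^{s}|\mathcal{F}u(k)|^{2}\,dk=|Tr(v)|^{2}_{H^{s}_{m}(\R^{N})},
$$
multiplying by $\kappa_{s}$ and invoking Theorem \ref{thmterry}$(2)$ gives at once
$$
\kappa_{s}\int_{\R^{N}}(|k|^{2}+m^{2})^{s}|\mathcal{F}u(k)|^{2}\,dk=\kappa_{s}|Tr(v)|^{2}_{H^{s}_{m}(\R^{N})}\le\iint_{\R^{N+1}_{+}}y^{1-2s}(|\nabla v|^{2}+m^{2}v^{2})\,dx\,dy,
$$
which is the second inequality of \eqref{SI}; no extension property is needed, as this holds for every $v\in H^{1}_{m}(\R^{N+1}_{+},y^{1-2s})$.

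It then remains to produce, for each $q\in[2,\frac{2N}{N-2s}]$, a constant $C>0$ (depending on $q,s,N$ and $m$) with $C|u|^{2}_{L^{q}(\R^{N})}\le|u|^{2}_{H^{s}_{m}(\R^{N})}$ for all $u\in H^{s}_{m}(\R^{N})$; the constant in \eqref{SI} is then $C_{q,s,N}=\kappa_{s}C$. I would argue on the Fourier side, starting from the elementary bound $(|k|^{2}+m^{2})^{s}\ge\max\{|k|^{2s},m^{2s}\}$, valid for all $k\in\R^{N}$ since $s\in(0,1)$ and $m>0$. Keeping the first term and using the homogeneous fractional Sobolev inequality $\dot H^{s}(\R^{N})\hookrightarrow L^{2N/(N-2s)}(\R^{N})$ gives
$$
|u|^{2}_{H^{s}_{m}(\R^{N})}\ge\int_{\R^{N}}|k|^{2s}|\mathcal{F}u(k)|^{2}\,dk\ge\sigma_{N,s}\,|u|^{2}_{L^{2N/(N-2s)}(\R^{N})},
$$
while keeping the second term gives, by Plancherel, $|u|^{2}_{H^{s}_{m}(\R^{N})}\ge m^{2s}|u|^{2}_{L^{2}(\R^{N})}$. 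This settles the two endpoints $q=2$ and $q=\frac{2N}{N-2s}$. For a general intermediate exponent I would write $\frac{1}{q}=\frac{1-\theta}{2}+\frac{\theta(N-2s)}{2N}$ with $\theta\in[0,1]$ and interpolate, $|u|_{L^{q}(\R^{N})}\le|u|^{1-\theta}_{L^{2}(\R^{N})}|u|^{\theta}_{L^{2N/(N-2s)}(\R^{N})}$, so that $|u|^{2}_{L^{q}(\R^{N})}\le m^{-2s(1-\theta)}\sigma_{N,s}^{-\theta}\,|u|^{2}_{H^{s}_{m}(\R^{N})}$. Equivalently, since for fixed $m>0$ one has $c(1+|k|^{2})^{s}\le(|k|^{2}+m^{2})^{s}\le C(1+|k|^{2})^{s}$ with $c,C$ depending only on $m,s$, the space $H^{s}_{m}(\R^{N})$ coincides with $H^{s}(\R^{N})$ with equivalent norms and one may simply quote the standard Sobolev embedding $H^{s}(\R^{N})\hookrightarrow L^{q}(\R^{N})$ for $q\in[2,\frac{2N}{N-2s}]$.

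There is no serious obstacle here. The only point deserving care is that the embedding at the critical exponent $q=\frac{2N}{N-2s}$ cannot be obtained by a soft compactness argument and must go through the sharp fractional Sobolev inequality (equivalently, through the homogeneous seminorm $\int_{\R^{N}}|k|^{2s}|\mathcal{F}u(k)|^{2}\,dk$); and, as the estimate through $|u|_{L^{2}}$ already shows, for $q<\frac{2N}{N-2s}$ the constant $C_{q,s,N}$ in \eqref{SI} genuinely depends on $m$, even though this dependence is suppressed in the notation (for $q=\frac{2N}{N-2s}$ it does not, and one may take $C_{q,s,N}=\kappa_{s}\sigma_{N,s}$).
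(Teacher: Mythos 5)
Your proof is correct and follows exactly the route the paper intends: the outer inequality is just Theorem \ref{thmterry}$(2)$ applied to $u=Tr(v)$, and the inner one is the embedding $H^{s}_{m}(\R^{N})\hookrightarrow L^{q}(\R^{N})$, which the paper invokes without detail and which you substantiate via $(|k|^{2}+m^{2})^{s}\ge\max\{|k|^{2s},m^{2s}\}$, the homogeneous fractional Sobolev inequality, and interpolation. Your remark that the constant actually depends on $m$ for subcritical $q$ (despite the notation $C_{q,s,N}$) is a fair and accurate observation, consistent with how the paper later tracks the $m$-dependence in Section 6.
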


In the sequel we will exploit Theorem \ref{ext} and we will look for the solutions to the following problem
\begin{equation}
\left\{
\begin{array}{ll}
-\dive({y^{a} \nabla v})+m^{2}y^{a} v=0 &\mbox{ in }  \R^{N+1}_{+} \\
\frac{\partial v}{\partial \nu^{a}}=\kappa_{s}[m^{2s}u-\mu u+|u|^{p-2}u]   &\mbox{ on } \partial \R^{N+1}_{+} 
\end{array}
\right.
\end{equation}
where 
$$
\frac{\partial v}{\partial \nu^{a}}:=-\lim_{y \rightarrow 0} y^{a} \frac{\partial v}{\partial y}(x,y)
$$
and $a=1-2s\in (-1,1)$.

For simplicity we will assume that $\kappa_{s}=1$.
Finally we recall the following compact embedding (see \cite{Lions}): 
\begin{thm}\label{compactradial}
Let 
$$
X^{m}_{rad}= \{v\in H_{m}^{1}(\R^{N+1}_{+} , y^{1-2s}) : v \mbox{ is radially symmetric with respect to } x\}.
$$ 
Then $X^{m}_{rad}$ is compactly embedded in $L^{q}(\R^{N})$ for any $q\in \Bigl(2, \frac{2N}{N-2s} \Bigr)$. 
\end{thm}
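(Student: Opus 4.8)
The plan is to realize the embedding $X^m_{rad}\hookrightarrow L^q(\R^N)$ as the composition of the (bounded) trace map with the \emph{compact} radial Sobolev embedding $H^s_{rad}(\R^N)\hookrightarrow L^q(\R^N)$ due to Lions \cite{Lions}; the desired compactness then follows since the composition of a bounded operator with a compact one is compact.

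First I would check that the trace operator $Tr$ of Theorem \ref{thmterry} maps $X^m_{rad}$ boundedly into the radial subspace $H^s_{m,rad}(\R^N)$ of $H^s_m(\R^N)$: part $(2)$ of Theorem \ref{thmterry} gives $\kappa_s\,|Tr(v)|_{H^s_m(\R^N)}^2\le \|v\|_{H^1_m(\R^{N+1}_+,y^{1-2s})}^2$, and the trace of a function radial in $x$ is again radial, so $Tr$ has operator norm at most $\kappa_s^{-1/2}$ on $X^m_{rad}$. Next, since $m>0$ is fixed, $(|k|^2+m^2)^s$ is comparable to $(1+|k|^2)^s$ on $\R^N$, hence $H^s_m(\R^N)=H^s(\R^N)$ with equivalent norms, and likewise for the radial subspaces. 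By the symmetry--compactness theorem of Lions \cite{Lions}, the inclusion $H^s_{rad}(\R^N)\hookrightarrow L^q(\R^N)$ is compact for every $q\in\bigl(2,\frac{2N}{N-2s}\bigr)$ (the endpoints being genuinely excluded, by spreading and by concentration respectively). Composing, the map $v\mapsto Tr(v)=v(\cdot,0)$ is compact from $X^m_{rad}$ into $L^q(\R^N)$, which is exactly the assertion.

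If instead a self-contained argument were wanted, I would take a bounded sequence $(v_n)\subset X^m_{rad}$, pass to a weakly convergent subsequence $v_n\rightharpoonup v$ so that the traces satisfy $u_n:=v_n(\cdot,0)\rightharpoonup u$ in $H^s_m(\R^N)$, and combine two ingredients: (i) on each half-ball $B^{+}_{R}$ the weighted embedding $H^1(B^{+}_{R},y^{1-2s})\hookrightarrow L^q(\Gamma_{R}^{0})$ is compact---a weighted Rellich--Kondrachov statement, valid because $y^{1-2s}$ is a Muckenhoupt $A_2$ weight---so that $u_n\to u$ in $L^q_{loc}(\R^N)$; and (ii) a Strauss-type radial decay estimate bounding $|u_n(x)|$ for $|x|$ large by a negative power of $|x|$ times $\|v_n\|_{H^1_m(\R^{N+1}_+,y^{1-2s})}$, so that $\int_{|x|>R}|u_n|^q\,dx$ is small uniformly in $n$ as $R\to\infty$; together these yield $u_n\to u$ strongly in $L^q(\R^N)$. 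I expect the main obstacle along this direct route to be the radial decay estimate in the weighted extension variables with the correct power of $|x|$; the factorization approach packages precisely that difficulty into the citation of \cite{Lions}, leaving only routine verifications.
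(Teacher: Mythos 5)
Your factorization through the trace operator and Lions' radial compactness theorem is correct, and it is essentially the argument the paper has in mind: the paper gives no proof at all, simply citing \cite{Lions}, and your verifications (boundedness of $Tr$ on the radial subspace via Theorem \ref{thmterry}, equivalence of $H^{s}_{m}$ and $H^{s}$ norms for fixed $m>0$, and compactness of $H^{s}_{rad}(\R^{N})\hookrightarrow L^{q}(\R^{N})$ for subcritical $q>2$) are exactly the routine steps left implicit. No gap; the self-contained alternative you sketch is not needed.
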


\section{Some results on elliptic problems involving $(-\Delta+m^{2})^{s}$:
Schauder estimates and maximum principles}

In this section we give some results about local Schauder estimates and maximum principle for problems involving the operator 
$$
-\dive(y^{1-2s} \nabla v)+m^{2}y^{1-2s}v.
$$
Firstly we give the following definition:
\begin{defn}
Let $R>0$ and $h\in L^{1}(\Gamma^{0}_{R})$. We say that $v\in H^{1}_{m}(B^{+}_{R})$ is a weak solution to 
\begin{equation*}
\left\{
\begin{array}{ll}
-\dive(y^{1-2s} \nabla v)+m^{2}y^{1-2s}v=y^{1-2s}d  &\mbox{ in } B^{+}_{R} \\
\frac{\partial v}{\partial \nu^{1-2s}}=h &\mbox{ on } \Gamma^{0}_{R} 
\end{array}
\right.
\end{equation*}
if
\begin{equation*}
\iint_{B^{+}_{R}} y^{1-2s} [\nabla v \nabla \phi+m^{2} v \varphi] dx dy=\int_{\Gamma^{0}_{R} } h \varphi dx
\end{equation*}
for any $\varphi \in C^{1}(\overline{B_{R}^{+}})$ such that $\varphi=0$ on $\Gamma_{R}^{+}$.
\end{defn}

Now, we state the following several regularity results whose proof can be found in \cite{FF}.

\begin{thm}\label{thm7}
Let $a,b\in L^{q}(\Gamma_{R}^{0})$ for some $q>\frac{N}{2s}$ and $c,d\in L^{r}(B^{+}_{R},y^{1-2s})$ for some $r>\frac{N+2-2s}{2}$. Let $v\in H^{1}_{m}(B^{+}_{R},y^{1-2s})$ be a weak solution to 
\begin{equation*}
\left\{
\begin{array}{ll}
-\dive(y^{1-2s} \nabla v)+m^{2}y^{1-2s}v=y^{1-2s}d  &\mbox{ in } B^{+}_{R} \\
\frac{\partial v}{\partial \nu^{1-2s}}=a(x)v+b(x) &\mbox{ on } \Gamma^{0}_{R} 
\end{array}.
\right.
\end{equation*}
Then $v\in C^{0,\alpha}(\overline{B^{+}_{R/2}})$.
\end{thm}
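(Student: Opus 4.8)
The plan is to prove the local Schauder-type estimate of Theorem \ref{thm7} by the classical Moser/De Giorgi-type bootstrap adapted to the degenerate-weighted setting, following the strategy of Caffarelli--Silvestre and its refinements. Since $y^{1-2s}$ is an $A_2$-Muckenhoupt weight, the weighted Sobolev inequality
\begin{equation*}
\Bigl(\iint_{B^+_R} |w|^{2\chi} y^{1-2s}\,dx\,dy\Bigr)^{1/\chi}\le C\iint_{B^+_R} (|\nabla w|^2+w^2) y^{1-2s}\,dx\,dy
\end{equation*}
holds for some $\chi>1$, together with the trace inequality (\ref{SI}) controlling $L^q$-norms on $\Gamma^0_R$ by the weighted $H^1$-norm; these are the two analytic engines of the argument.

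First I would reduce to the homogeneous situation: absorb the term $b(x)$ and the interior datum $d$ by standard a priori considerations (they lie in subcritical Lebesgue classes by the hypotheses $q>N/(2s)$ and $r>(N+2-2s)/2$), so the core case is $\frac{\partial v}{\partial\nu^{1-2s}}=a(x)v$ on $\Gamma^0_R$ with $a\in L^q(\Gamma^0_R)$. Then I would run a Moser iteration: for $\beta\ge 1$ test the weak formulation against $\varphi=\eta^2|v|^{2\beta-2}v$ (truncated to make it admissible), use the Caccioppoli-type inequality that results after Young's inequality, and control the boundary term $\int_{\Gamma^0_R}a\,\eta^2|v|^{2\beta}\,dx$ by Hölder with exponent $q$ together with the trace-Sobolev embedding — here the gain of integrability $\chi$ beats the loss from the exponent $q>N/(2s)$, which is exactly where that hypothesis is used. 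Iterating over a sequence of shrinking radii between $R/2$ and $R$ and letting $\beta\to\infty$ yields $v\in L^\infty(B^+_{R'})$ for some $R'>R/2$, with a quantitative bound by $\|v\|_{H^1_m(B^+_R,y^{1-2s})}$ and the data.

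Once $v$ is bounded, the boundary term $a(x)v(x)$ becomes an $L^q$ function with $q>N/(2s)$, and one is in the setting of the regularity theory for $-\dive(y^{1-2s}\nabla v)=y^{1-2s}d$, $\partial_{\nu^{1-2s}}v=g$ with $g\in L^q(\Gamma^0_R)$, $d\in L^r(B^+_R,y^{1-2s})$. For this I would invoke the De Giorgi--Nash--Moser oscillation-decay estimate for degenerate equations with $A_2$ weights (Fabes--Kenig--Serapioni), combined with the boundary regularity machinery already developed for the extension operator in \cite{FF}: an even reflection across $\{y=0\}$ (legitimate because the conormal data is now a genuine $L^q$ function, treated as a measure-type right-hand side on the hyperplane) turns the Neumann problem into an interior problem for a weighted divergence-form operator, to which the interior Hölder estimate applies and gives $v\in C^{0,\alpha}(\overline{B^+_{R/2}})$ with the desired exponent $\alpha=\alpha(N,s,q,r)$.

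The main obstacle is the Moser iteration step at the boundary: one must check that the boundary integral $\int_{\Gamma^0_R} |a|\,\eta^2|v|^{2\beta}\,dx$ can be absorbed by a small fraction of the weighted Dirichlet energy uniformly along the iteration, which requires the sharp interplay between the trace-Sobolev exponent coming from (\ref{SI}) (a codimension-one embedding, so the critical exponent is $\frac{2N}{N-2s}$) and the integrability $q>\frac{N}{2s}$ of $a$; getting the constants to compound correctly so that the sequence $\beta_k=\chi_0^k$ (with $\chi_0$ the net integrability gain) produces a convergent product is the delicate bookkeeping. A secondary point is that the weight $y^{1-2s}$ is singular or degenerate depending on the sign of $1-2s$, so every Sobolev/Poincaré inequality must be the weighted one for the $A_2$ class rather than the classical one, and the admissible test functions must be chosen in $C^1(\overline{B^+_R})$ vanishing on $\Gamma^+_R$ and then approximated; I expect these to be routine once the iteration scheme is set up, since all the needed weighted inequalities are available in \cite{FF} and the cited literature.
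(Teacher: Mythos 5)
The paper contains no proof of this statement: it is quoted as a known result, with the explicit remark that the proof ``can be found in \cite{FF}'', so there is no internal argument to compare yours against. Your sketch is, in substance, the standard proof found in that reference and in the related literature (Cabr\'e--Sire \cite{CS1}, together with the Fabes--Kenig--Serapioni theory for degenerate equations with $A_{2}$ weights): a boundary Moser iteration giving $v\in L^{\infty}$ locally, after which $a(x)v+b(x)$ is an $L^{q}$ Neumann datum with $q>\frac{N}{2s}$ and the weighted De Giorgi--Nash--Moser oscillation decay yields H\"older continuity up to $\{y=0\}$. As a blueprint this is correct, and the thresholds you single out are the right ones: $q>\frac{N}{2s}$ is precisely what makes the boundary term absorbable against the trace--Sobolev exponent $\frac{2N}{N-2s}$, and $r>\frac{N+2-2s}{2}$ is exactly subcriticality of the source $y^{1-2s}d$ with respect to the weighted Sobolev inequality in homogeneous dimension $N+2-2s$. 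Two points of your outline would need more care in a full write-up. First, the inequality (\ref{SI}) in the paper is global on $\R^{N+1}_{+}$; the iteration needs a local trace and Sobolev inequality on $B^{+}_{R}$ with cutoff functions vanishing on $\Gamma^{+}_{R}$, which is standard but must be stated (the zero-order term $m^{2}y^{1-2s}v$ has a favorable sign and can simply be kept on the left or discarded). Second, the even-reflection step is only literally an interior equation when the conormal datum vanishes; with nonzero $g\in L^{q}(\Gamma^{0}_{R})$ the reflected function satisfies an equation whose right-hand side is concentrated on $\{y=0\}$, and the cleaner route --- the one actually taken in \cite{FF} and \cite{CS1} --- is to run the oscillation-decay argument directly on the Neumann problem, treating the boundary integral inside the energy estimates, rather than appealing to an interior theory with measure data.
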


\begin{thm}\label{thm8}
Let $a,b\in C^{k}(\Gamma_{R}^{0})$ for some $q>\frac{N}{2s}$ and $\nabla_{x} c,\nabla_{x} d\in L^{\infty}(B^{+}_{R})$ for some $k\geq 1$.
Let $v\in H^{1}_{m}(B^{+}_{R},y^{1-2s})$ be a weak solution to 
\begin{equation*}
\left\{
\begin{array}{ll}
-\dive(y^{1-2s} \nabla v)+m^{2}y^{1-2s}v=y^{1-2s}d &\mbox{ in } B^{+}_{R} \\
\frac{\partial v}{\partial \nu^{1-2s}}=a(x)v+b(x) &\mbox{ on } \Gamma^{0}_{R} 
\end{array}.
\right.
\end{equation*}
Then $v\in C^{i,\alpha}(\overline{B^{+}_{R/2}})$ for $i=1,\dots,k$.
\end{thm}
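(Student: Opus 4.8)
The plan is to argue by induction on $i$, taking Theorem~\ref{thm7} as the base case $i=0$ and gaining one full order of regularity at each step through a difference-quotient (Nirenberg) argument in the $x$-variables. This is available to us precisely because the operator $Lv:=-\dive(y^{1-2s}\nabla v)+m^{2}y^{1-2s}v$ and the conormal derivative $\frac{\partial v}{\partial\nu^{1-2s}}$ are invariant under translations parallel to $\partial\R^{N+1}_{+}$. Fix radii $R/2<\rho'<\rho<R$ and let $e_{1},\dots,e_{N}$ denote the canonical basis of $\R^{N}$. By Theorem~\ref{thm7} we already know $v\in C^{0,\alpha}(\overline{B^{+}_{\rho}})$; in particular $v$ and its trace $v(\cdot,0)$ are bounded on such half-balls.

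I carry out the first step, from $C^{0,\alpha}$ to $C^{1,\alpha}$, in detail; the higher steps are identical. For $1\le\ell\le N$ and $|h|$ small, set $\delta^{h}_{\ell}v:=h^{-1}\bigl(v(\cdot+he_{\ell},\cdot)-v\bigr)$, which belongs to $H^{1}_{m}(B^{+}_{\rho'},y^{1-2s})$ by translation invariance of the weight. Because $L$ commutes with $x$-translations, $\delta^{h}_{\ell}v$ is a weak solution on $B^{+}_{\rho'}$ of
\begin{equation*}
\left\{
\begin{array}{ll}
-\dive\bigl(y^{1-2s}\nabla(\delta^{h}_{\ell}v)\bigr)+m^{2}y^{1-2s}\,\delta^{h}_{\ell}v=y^{1-2s}\,\delta^{h}_{\ell}d &\mbox{ in } B^{+}_{\rho'} \\
\frac{\partial(\delta^{h}_{\ell}v)}{\partial\nu^{1-2s}}=a(x)\,\delta^{h}_{\ell}v+\bigl(\delta^{h}_{\ell}a\bigr)(x)\,v(x+he_{\ell},0)+\bigl(\delta^{h}_{\ell}b\bigr)(x) &\mbox{ on } \Gamma^{0}_{\rho'}
\end{array}
\right.
\end{equation*}
where the boundary identity follows from the product-rule splitting $\delta^{h}_{\ell}(av)=a\,\delta^{h}_{\ell}v+(\delta^{h}_{\ell}a)\,v(\cdot+he_{\ell},0)$. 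This problem is exactly of the form covered by Theorem~\ref{thm7}: the coefficient multiplying $\delta^{h}_{\ell}v$ on the boundary is again $a$, the inhomogeneous boundary term $(\delta^{h}_{\ell}a)\,v(\cdot+he_{\ell},0)+\delta^{h}_{\ell}b$ is bounded in $L^{q}(\Gamma^{0}_{\rho'})$ uniformly in $h$ because $a,b\in C^{1}$ and $v(\cdot,0)\in C^{0,\alpha}\subset L^{\infty}$, and the interior term $\delta^{h}_{\ell}d$ is bounded in $L^{r}(B^{+}_{\rho'},y^{1-2s})$ uniformly in $h$ since $\nabla_{x}d\in L^{\infty}$ and $y^{1-2s}$ is locally integrable. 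Theorem~\ref{thm7} then yields $\|\delta^{h}_{\ell}v\|_{C^{0,\alpha}(\overline{B^{+}_{\rho'/2}})}\le C$ with $C$ independent of $h$, and letting $h\to0$ gives $\partial_{x_{\ell}}v\in C^{0,\alpha}$ for each $\ell$, i.e.\ every tangential first derivative of $v$ is H\"older continuous up to the boundary. The remaining derivative $\partial_{y}v$ is read off from the equation: near $\{y=0\}$ it becomes the ordinary differential equation $\partial_{y}\bigl(y^{1-2s}\partial_{y}v\bigr)=y^{1-2s}\bigl(m^{2}v-\Delta_{x}v-d\bigr)$ in $y$ with H\"older right-hand side, which, together with the Neumann condition $-\lim_{y\to0}y^{1-2s}\partial_{y}v=av+b$ and after the standard substitution turning the $A_{2}$-weight $y^{1-2s}$ into a non-degenerate one, transfers the tangential gain to the normal direction and produces $v\in C^{1,\alpha}(\overline{B^{+}_{R/2}})$.

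For the inductive step one differentiates the equation and the boundary condition $j$ times in $x$: any $x$-derivative $w=D^{j}_{x}v$ with $j\le k-1$ solves a problem of the same structural type, with Neumann coefficient $a$ and forcing terms built from the $x$-derivatives of $a,b,d$ up to order $j$ and from the lower-order $x$-derivatives of $v$, all of which belong to the classes required by Theorem~\ref{thm7} thanks to $a,b\in C^{k}$ and the hypotheses on the $x$-gradients of the data; applying to $w$ the difference-quotient argument above produces $D^{j+1}_{x}v\in C^{0,\alpha}$, and the same ODE-in-$y$ analysis upgrades this to $v\in C^{j+1,\alpha}(\overline{B^{+}_{R/2}})$ (a routine covering argument absorbs the successive shrinkings of the radius). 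Iterating $k$ times proves the assertion. I expect the technical heart of the proof to be exactly the difference-quotient step: justifying rigorously, inside the degenerate weighted space $H^{1}_{m}(B^{+}_{R},y^{1-2s})$, that $\delta^{h}_{\ell}v$ is an admissible weak solution of the displayed problem, and --- the delicate point --- that the constant in the $C^{0,\alpha}$-estimate of Theorem~\ref{thm7} does not blow up as $h\to0$. A secondary difficulty is the behaviour of $\partial_{y}v$ at $\{y=0\}$, where the degeneracy of $y^{1-2s}$ means that for $s<1/2$ the $C^{i,\alpha}$ conclusion in the $y$-direction must be understood via the regularized variable rather than in the naive Euclidean sense.
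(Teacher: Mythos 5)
The paper itself offers no proof of Theorem \ref{thm8}: it is quoted, together with Theorems \ref{thm7} and \ref{thm9}, from \cite{FF} (``whose proof can be found in [FF]''), so there is no internal argument to compare yours against. Your strategy---tangential difference quotients, the translation invariance of $-\dive(y^{1-2s}\nabla\,\cdot)+m^{2}y^{1-2s}\,\cdot$ and of the conormal derivative, the product-rule splitting of the boundary datum, an application of Theorem \ref{thm7} to $\delta^{h}_{\ell}v$, and finally reading the normal direction off the equation---is exactly the standard route by which such Schauder-type results are established in the cited literature (\cite{FF}, and Lemma 4.5 in \cite{CS1}), and the inductive scheme in $i$ is organized correctly; the hypotheses $a,b\in C^{k}$ and $\nabla_{x}d\in L^{\infty}$ are used exactly where they should be.

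Two points that you flag but do not settle are, however, where the real work lies, and as written the argument is incomplete there. First, Theorem \ref{thm7} as stated in the paper is purely qualitative; to let $h\to 0$ you need its quantitative companion, namely an estimate of $\|w\|_{C^{0,\alpha}(\overline{B^{+}_{\rho/2}})}$ in terms of $\|w\|_{H^{1}_{m}(B^{+}_{\rho},y^{1-2s})}$ and of the norms of the boundary and interior data, with constants not depending on the particular solution, together with a Caccioppoli-type bound giving $\|\delta^{h}_{\ell}v\|_{H^{1}_{m}(B^{+}_{\rho'},y^{1-2s})}\leq C$ uniformly in $h$; neither is stated in the paper, so both must be imported from \cite{FF} or proved before the uniform-in-$h$ H\"older bound is legitimate. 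Second, the conclusion in the $y$-direction cannot be taken in the naive sense when $s<1/2$: at a boundary point where $a(x)v(x,0)+b(x)\neq 0$ one has $-y^{1-2s}\partial_{y}v\to a v+b\neq 0$, hence $\partial_{y}v\sim y^{2s-1}$ blows up as $y\to 0$, so the correct assertion (consistent with Theorem \ref{thm9} and with the formulation in \cite{FF}) concerns $\nabla_{x}v$ and the weighted derivative $y^{1-2s}\partial_{y}v$ up to $\{y=0\}$. Your closing remark about the ``regularized variable'' is the right reading, but the ODE-in-$y$ step should then be carried out for $y^{1-2s}\partial_{y}v$ rather than claiming classical $C^{1,\alpha}$ regularity of $v$ in the $y$-variable; as long as the statement of Theorem \ref{thm8} is understood in that sense, your outline is the correct one.
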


\begin{thm}\label{thm9}
Let $g\in C^{0,\gamma}(\Gamma_{R}^{0})$ for some $\gamma \in [0,2-2s)$. 
Let $v\in H^{1}_{m}(B^{+}_{R},y^{1-2s})$ be a weak solution to 
\begin{equation*}
\left\{
\begin{array}{ll}
-\dive(y^{1-2s} \nabla v)+m^{2}y^{1-2s}v= 0 &\mbox{ in } B^{+}_{R} \\
\frac{\partial v}{\partial \nu^{1-2s}}=g &\mbox{ on } \Gamma^{0}_{R} 
\end{array}.
\right.
\end{equation*}
Then, for any $t_{0}>0$ sufficiently small,  $y^{1-2s}\partial_{y} v \in C^{0,\alpha}([0,t_{0})\times \Gamma_{R/8})$.
\end{thm}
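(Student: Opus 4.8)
The plan is to reduce the statement to a boundary regularity result for the \emph{conjugate equation}. Since the operator $-\dive(y^{1-2s}\nabla\,\cdot\,)+m^{2}y^{1-2s}\,\cdot$ is uniformly elliptic away from $\{y=0\}$, interior elliptic theory gives $v\in C^{\infty}(B^{+}_{R})$, while Theorem \ref{thm7} (applied with $a\equiv 0$, $b=g\in L^{\infty}(\Gamma^{0}_{R})\subset L^{q}(\Gamma^{0}_{R})$, $c\equiv d\equiv 0$) gives $v\in C^{0,\alpha}(\overline{B^{+}_{R/2}})$. I would then set
$$
w(x,y):=y^{1-2s}\,\partial_{y}v(x,y),\qquad y>0,
$$
and observe that, differentiating the equation for $v$ in the $y$ variable and using the algebraic identity $(2s-1)+(1-2s)=0$, the function $w$ solves classically in $\{y>0\}$ the equation
$$
-\dive(y^{2s-1}\nabla w)+m^{2}y^{2s-1}w=0,
$$
that is, the extension-type equation attached to the fractional exponent $\sigma:=1-s\in(0,1)$, since $2s-1=1-2\sigma$. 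Moreover, by the very definition of the conormal derivative, $\lim_{y\to 0^{+}}w(x,y)=-\frac{\partial v}{\partial\nu^{1-2s}}(x)=-g(x)$, so $w$ carries the \emph{Dirichlet} datum $-g\in C^{0,\gamma}(\Gamma^{0}_{R/2})$.

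Granting that $w$ is an admissible weak solution of this conjugate problem on a smaller half-ball, the conclusion will follow from the H\"older regularity up to the boundary for the Dirichlet problem associated with $-\dive(y^{1-2\sigma}\nabla\,\cdot\,)+m^{2}y^{1-2\sigma}\,\cdot\,$: since $-g\in C^{0,\gamma}(\Gamma^{0}_{R/2})$ with $\gamma\in[0,2-2s)=[0,2\sigma)$, this regularity theory --- a companion of the results recalled in Theorems \ref{thm7}--\ref{thm9} (see \cite{FF}), or alternatively obtainable by extending $-g$ to a $C^{0,\gamma}$ function, subtracting it off, reflecting the difference oddly across $\{y=0\}$ and invoking De Giorgi--Nash--Moser estimates for the Muckenhoupt $A_{2}$ weight $|y|^{2s-1}$ --- yields $w\in C^{0,\alpha}(\overline{B^{+}_{R/4}})$ for some $\alpha\in(0,1)$. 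Restricting to a half-neighbourhood of the origin then gives $y^{1-2s}\partial_{y}v=w\in C^{0,\alpha}([0,t_{0})\times\Gamma_{R/8})$ for every sufficiently small $t_{0}>0$, which is the assertion.

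The main obstacle is to make the passage to $w$ rigorous: one must verify that $w\in H^{1}_{m}(B^{+}_{\rho},y^{2s-1})$ for $\rho<R/2$ and that it attains the boundary value $-g$ in the trace sense, so that the weighted regularity theory genuinely applies. The displayed equation for $w$ is only formal as written --- it involves third-order derivatives of $v$ --- so I would first localize, use the interior smoothness of $v$ on $\{y>0\}$ to justify the computation there, and then control $w$ near $\{y=0\}$ through a Caccioppoli inequality for the conjugate operator together with the energy bound $v\in H^{1}_{m}(B^{+}_{R},y^{1-2s})$; here the identity $(2s-1)+(1-2s)=0$ and the standard boundary behaviour $v(x,y)=v(x,0)+O(y^{2s})$ of solutions of the extension problem are exactly what ensure that the weight $y^{2s-1}$ is integrable against the leading behaviour of $\nabla w$ at $\{y=0\}$, so that the limiting weak formulation of the equation for $v$ transfers to $w$. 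Once $w$ is known to be an admissible weak solution carrying the datum $-g$, the remaining steps are routine applications of the weighted elliptic regularity machinery.
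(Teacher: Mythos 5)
Your overall strategy is the standard one: pass to the conjugate function $w=y^{1-2s}\partial_{y}v$, observe that it solves $-\dive(y^{2s-1}\nabla w)+m^{2}y^{2s-1}w=0$ (the extension equation for the exponent $\sigma=1-s$), that it carries the Dirichlet datum $-g$ on $\{y=0\}$, and then invoke boundary H\"older regularity for the weighted Dirichlet problem. The formal computation (including the mass term) and the identification of the boundary value are correct, and the hypothesis $\gamma\in[0,2-2s)$ is indeed the natural restriction for that weight, since $y^{2-2s}$ is the homogeneous solution of the conjugate ODE. For what it is worth, the paper itself contains no proof of Theorem \ref{thm9}: it is quoted from \cite{FF}, whose argument (in the spirit of \cite{CS1}) is precisely this conjugation scheme, so you are aligned with the source the paper relies on.

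However, as written your argument has a genuine gap, and it sits at the only hard point of the lemma. Away from $\{y=0\}$ everything is classical; the entire content of the statement is that $w$ is a weak solution in $H^{1}(B^{+}_{\rho},y^{2s-1})$ \emph{up to the boundary} and attains $-g$ there in a sense strong enough for the weighted regularity theory. You acknowledge this but do not close it, and the two tools you propose for closing it are problematic. First, the expansion $v(x,y)=v(x,0)+O(y^{2s})$ is not available at this stage: it is essentially equivalent to the boundedness of $y^{1-2s}\partial_{y}v$ near $\{y=0\}$, i.e.\ to (a weak form of) the conclusion, so invoking it is circular; from Theorem \ref{thm7} you only know $v(x,y)=v(x,0)+O(y^{\alpha})$. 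The correct substitute is an energy/difference-quotient (Caccioppoli-type) estimate using the equation, e.g.\ controlling $\partial_{y}w=y^{1-2s}(m^{2}v-\Delta_{x}v)$ together with tangential regularity of $v$, which is how the cited proofs proceed. Second, the fallback of extending $-g$ to a $C^{0,\gamma}$ function $G$, subtracting it and reflecting oddly does not work as stated: after subtraction the weak formulation involves $\nabla G$, which a merely H\"older extension does not possess, and odd reflection across $\{y=0\}$ is legitimate only for zero Dirichlet data. With H\"older data one must instead use a boundary oscillation-decay argument in the weighted De Giorgi--Nash--Moser theory (or a smoother extension with interior derivative bounds, or simply the Dirichlet boundary estimate of \cite{FF}). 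So: right strategy, but the decisive step --- membership of $w$ in the conjugate energy space up to $\{y=0\}$ with trace $-g$, and the boundary H\"older estimate for H\"older Dirichlet data --- is not actually established in your proposal.
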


In the spirit of the paper \cite{CS1} we can prove the following maximum principles:

\begin{thm}(weak maximum principle)
Let $v\in H^{1}_{m}(B^{+}_{R},y^{1-2s})$ be a weak solution to 
\begin{equation}
\left\{
\begin{array}{ll}
-\dive(y^{1-2s} \nabla v)+m^{2}y^{1-2s}v \geq 0 &\mbox{ in } B^{+}_{R} \\
\frac{\partial v}{\partial \nu^{1-2s}}\geq 0 &\mbox{ on } \Gamma^{0}_{R} \\
v \geq 0 &\mbox{ on } \Gamma^{+}_{R}
\end{array}.
\right.
\end{equation}
Then $v\geq 0$ in $B^{+}_{R}$.
\end{thm}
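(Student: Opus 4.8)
The plan is to mimic the classical proof of the weak maximum principle by testing the weak formulation with the negative part $v^{-} = \max\{-v, 0\}$ of the solution, which is an admissible test function because $v \geq 0$ on $\Gamma^{+}_{R}$ forces $v^{-}$ to vanish there, so $v^{-}$ lies in the appropriate weighted Sobolev space with zero trace on $\Gamma^{+}_{R}$. First I would record that, since $v \in H^{1}_{m}(B^{+}_{R}, y^{1-2s})$, the truncation $v^{-}$ belongs to the same space and $\nabla v^{-} = -\nabla v\,\chi_{\{v<0\}}$ in the weak sense (this is the standard Stampacchia-type chain rule, which holds for Sobolev functions with respect to the Muckenhaupt weight $y^{1-2s}$ as well, since $y^{1-2s}$ is an $A_{2}$ weight). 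One should also note that $v^{-} \geq 0$ everywhere, so it is a legitimate nonnegative test function against both inequalities in the hypotheses.

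Next I would plug $\varphi = v^{-}$ into the weak formulation of the two inequalities. The distributional inequality $-\dive(y^{1-2s}\nabla v) + m^{2} y^{1-2s} v \geq 0$ in $B^{+}_{R}$ together with $\frac{\partial v}{\partial \nu^{1-2s}} \geq 0$ on $\Gamma^{0}_{R}$ means precisely that
\begin{equation*}
\iint_{B^{+}_{R}} y^{1-2s}\left[\nabla v \cdot \nabla \varphi + m^{2} v \varphi\right] dx\, dy \geq \int_{\Gamma^{0}_{R}} \frac{\partial v}{\partial \nu^{1-2s}}\, \varphi \, dx \geq 0
\end{equation*}
for every admissible $\varphi \geq 0$. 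Taking $\varphi = v^{-}$ and using $\nabla v \cdot \nabla v^{-} = -|\nabla v^{-}|^{2}$ and $v\, v^{-} = -(v^{-})^{2}$ pointwise, the left-hand side becomes $-\iint_{B^{+}_{R}} y^{1-2s}\left[|\nabla v^{-}|^{2} + m^{2}(v^{-})^{2}\right] dx\, dy$, which is therefore $\geq 0$. Since the integrand is nonnegative and $m > 0$, this forces $v^{-} \equiv 0$ in $B^{+}_{R}$, i.e. $v \geq 0$.

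The main obstacle, and the point deserving the most care, is the justification that $v^{-}$ is genuinely an admissible test function in the sense of the weak-solution definition — namely that it can be approximated by functions $\varphi \in C^{1}(\overline{B^{+}_{R}})$ vanishing on $\Gamma^{+}_{R}$, in the $H^{1}_{m}(B^{+}_{R}, y^{1-2s})$ norm. This requires the density of such smooth functions in the subspace of the weighted Sobolev space consisting of functions with vanishing trace on $\Gamma^{+}_{R}$, a fact which follows from the theory of weighted Sobolev spaces with $A_{2}$ weights (the weight $y^{1-2s}$ with $1-2s \in (-1,1)$ is Muckenhaupt-$A_{2}$), together with the fact that $v^{-}$ has vanishing trace on $\Gamma^{+}_{R}$ because $v \geq 0$ there. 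A secondary technical point is ensuring the boundary term $\int_{\Gamma^{0}_{R}} \frac{\partial v}{\partial \nu^{1-2s}}\, v^{-}\, dx$ is well-defined and nonnegative; this is handled by interpreting $\frac{\partial v}{\partial \nu^{1-2s}} \geq 0$ in the distributional/variational sense, so that the inequality is built into the weak formulation rather than needing pointwise trace regularity. Once these density and admissibility issues are settled, the energy estimate closes the argument immediately. If one wishes to allow $m = 0$ as well, an additional observation using the Poincar\'e-type inequality on $B^{+}_{R}$ (valid because $v^{-}$ vanishes on $\Gamma^{+}_{R}$) recovers the conclusion, but for the stated hypothesis $m > 0$ the zeroth-order term alone suffices.
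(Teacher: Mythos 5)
Your proposal is correct and follows essentially the same route as the paper, whose entire proof is the one-line remark that it suffices to multiply the weak formulation by $v^{-}$; your testing with $v^{-}$, the sign identities $\nabla v\cdot\nabla v^{-}=-|\nabla v^{-}|^{2}$ and $v\,v^{-}=-(v^{-})^{2}$, and the resulting energy identity are exactly that argument. The additional care you take with the admissibility of $v^{-}$ (vanishing trace on $\Gamma^{+}_{R}$, density of smooth test functions in the $A_{2}$-weighted space) is a legitimate filling-in of details the paper leaves implicit, not a different method.
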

\begin{proof}
It is enough to multiply the weak formulation of above problem by $v^{-}$. 
\end{proof}
\begin{remark}
We can deduce also the strong maximum principle: either $v\equiv 0$ or $v>0$ in $B^{+}_{R} \cup \Gamma _{R}^{0}$. In fact, $v$ can't vanish at an interior point  by the classical strong maximum principle for strictly elliptic operators. Finally the fact that $v$ can't vanish at a point in $\Gamma _{R}^{0}$ follows by the Hopf principle that we will proved below.
\end{remark}

\begin{thm}\label{hopf}
Let $C_{R}=B_{R}(0)\times (0,1)$ and $v\in H^{1}_{m}(C_{R},y^{1-2s}) \cap C(\overline{C_{R}})$ be a weak solution to 
\begin{equation}
\left\{
\begin{array}{ll}
-\dive(y^{1-2s} \nabla v)+m^{2}y^{1-2s}v \leq 0  &\mbox{ in } C_{R} \\
v>0  &\mbox{ in } C_{R} \\
v(0,0)=0  
\end{array}.
\right.
\end{equation}
Then
$$
\limsup_{y\rightarrow 0^{+}}-y^{1-2s} \frac{v(0,y)}{y}<0.
$$
If $y^{1-2s}v_{y}\in C(\overline{C_{R}})$ then 
$$
\frac{\partial v}{\partial y^{1-2s}}(0,0)<0.
$$
\end{thm}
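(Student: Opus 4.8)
The plan is to run the classical barrier argument for Hopf's lemma, adapted to the degenerate weight $y^{1-2s}$. The one new feature is that, to see the conormal derivative $-\lim_{y\to0^+}y^{1-2s}v_y$, the barrier must vanish \emph{exactly} to order $y^{2s}$ at the origin; and since a priori $v$ may vanish at points of $\Gamma^0_R$ other than $(0,0)$, the barrier must also vanish on the whole part of the boundary of the test domain lying on $\{y=0\}$.

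First I would localize. Fix $\rho\in(0,R)$ and $\tau\in(0,1)$ and set $D:=B_\rho(0)\times(0,\tau)\subset C_R$, with $B_\rho(0)\subset\R^N$. Since $v\in C(\overline{C_R})$ and $v>0$ in $C_R$, we have $v\ge0$ on $\partial D$ and $v\ge c_0:=\min_{\overline{B_\rho(0)}\times\{\tau\}}v>0$ on the top face, which is a compact subset of $C_R$. Then I would build the barrier in separated form. Let $\eta$ be the first Dirichlet eigenfunction of $-\Delta$ on $B_\rho(0)$, normalized by $0<\eta\le1$, $\eta|_{\partial B_\rho(0)}=0$, $-\Delta\eta=\lambda_1\eta$ (so $\lambda_1>0$ and $\eta(0)>0$), and let $\phi$ be the solution on $(0,\tau]$ of
\[
-(y^{1-2s}\phi')'+(\lambda_1+m^2)\,y^{1-2s}\phi=0
\]
with $\phi(0)=0$ that behaves like $y^{2s}$ at the origin, namely $\phi(y)=y^{s}I_s\big(\sqrt{\lambda_1+m^2}\,y\big)$, $I_s$ the modified Bessel function of the first kind; then $\phi>0$ and increasing on $(0,\tau]$ and $\phi(y)=c_s\,y^{2s}(1+o(1))$ as $y\to0^+$ for some $c_s>0$. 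Put $h(x,y):=\eta(x)\phi(y)$. A direct computation using $-\Delta\eta=\lambda_1\eta$ and the equation for $\phi$ gives
\[
-\dive(y^{1-2s}\nabla h)+m^2y^{1-2s}h=\eta(x)\big[(\lambda_1+m^2)y^{1-2s}\phi-(y^{1-2s}\phi')'\big]=0\quad\text{in }D,
\]
while $h=0$ on the lateral boundary $\partial B_\rho(0)\times[0,\tau]$ and on the bottom $\overline{B_\rho(0)}\times\{0\}$, and $0\le h\le\phi(\tau)$ on $\overline D$.

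Next I would compare. Choosing $\varepsilon>0$ with $\varepsilon\phi(\tau)\le c_0$ gives $\varepsilon h\le v$ on all of $\partial D$; since $h$ solves the equation and $v$ satisfies the corresponding differential inequality in the sense permitting comparison, $w:=v-\varepsilon h$ satisfies $-\dive(y^{1-2s}\nabla w)+m^2y^{1-2s}w\ge0$ in $D$ and $w\ge0$ on $\partial D$, and testing with $w^-$ exactly as in the proof of the weak maximum principle above forces $w^-\equiv0$, i.e. $v\ge\varepsilon h$ in $D$. In particular $v(0,y)\ge\varepsilon\eta(0)\phi(y)$ for $0<y<\tau$, so, using $(1-2s)+2s-1=0$,
\[
-y^{1-2s}\,\frac{v(0,y)}{y}\ \le\ -\varepsilon\eta(0)\,y^{-2s}\phi(y)\ \longrightarrow\ -\varepsilon\eta(0)\,c_s<0\qquad\text{as }y\to0^+,
\]
which is the first assertion. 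If moreover $y^{1-2s}v_y\in C(\overline{C_R})$, then $\ell:=\lim_{y\to0^+}y^{1-2s}v_y(0,y)$ exists, and from $v(0,y)=\int_0^y t^{2s-1}\big(t^{1-2s}v_y(0,t)\big)\,dt$ one gets $v(0,y)=\tfrac{\ell}{2s}y^{2s}+o(y^{2s})$; comparing with the lower bound forces $\ell>0$, i.e. $\frac{\partial v}{\partial\nu^{1-2s}}(0,0)=-\ell<0$.

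The hard part is the barrier: it must be, all at once, a (sub/super-)solution of the degenerate operator — here I make it an exact solution, which is cleanest — zero on the entire part of $\partial D$ in $\{y=0\}$, and vanishing at the origin like $y^{2s}$ and no faster. The separated ansatz $\eta(x)\phi(y)$ achieves all three, at the cost of having to study the radial degenerate ODE for $\phi$ (its positivity on $(0,\tau]$ and its sharp $y^{2s}$ behaviour at $0$); note that the eigenvalue $\lambda_1$ (and $m^2$) rule out the naive guess $\phi(y)=y^{2s}$ and force the Bessel profile. A minor point is that the weak maximum principle is used on $D$ rather than on $B_R^+$, but its proof carries over verbatim.
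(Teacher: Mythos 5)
Your proof is correct and follows the same strategy as the paper, which takes the separated barrier $w_{A}(x,y)=y^{-1+2s}(y+Ay^{2})\varphi(x)$, with $\varphi$ the first eigenfunction of $-\Delta_{x}+m^{2}$ in $B_{R/2}(0)$ with zero boundary data, and concludes as in Proposition 4.11 of \cite{CS1}: a comparison function of product form, vanishing on the lateral boundary and on $\{y=0\}$ and behaving exactly like $y^{2s}$ as $y\rightarrow 0^{+}$, is slid under $v$ on a sub-cylinder via the weak maximum principle. The only genuine difference is the $y$-profile: the paper (following \cite{CS1}) uses the elementary polynomial $y^{2s}+Ay^{2s+1}$, which is not a solution but becomes a subsolution of $-\partial_{y}(y^{1-2s}\partial_{y}\,\cdot\,)+(\lambda_{1}+m^{2})y^{1-2s}\,\cdot$ on a thin slab $\{0<y<y_{0}\}$ once $A$ and $y_{0}$ are chosen (a one-line computation, since $(y^{1-2s}(y^{2s})')'=0$ and $(y^{1-2s}(y^{2s+1})')'=2s+1$), whereas you take the exact solution $y^{s}I_{s}(\sqrt{\lambda_{1}+m^{2}}\,y)$; this removes the parameter $A$ and the thin-slab restriction at the cost of invoking positivity, monotonicity and the $y^{2s}$ asymptotics of the modified Bessel function $I_{s}$, which are standard. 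Two remarks. First, your phrase ``in the sense permitting comparison'' is doing real work: the comparison needs the supersolution inequality $-\dive(y^{1-2s}\nabla v)+m^{2}y^{1-2s}v\geq 0$, which is also what the paper's argument requires; the ``$\leq 0$'' in the statement is a sign slip inherited from the convention of \cite{CS1}, where the operator is written as $\dive(y^{a}\nabla v)$ without the leading minus, and indeed for a genuine subsolution the conclusion fails (take $v=y^{2}$ with $m$ small). Second, when you test $w=v-\varepsilon h$ with $w^{-}$ you apply the inequality for $v$ against a test function whose support reaches $\{y=0\}$; this is legitimate because $w^{-}$ has zero trace on all of $\partial D$ and can be approximated in $H^{1}_{m}(D,y^{1-2s})$ by functions compactly supported in $D$, but it deserves a sentence, as it is the same density point used in \cite{CS1}.
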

\begin{proof}
We consider the function
$$
w_{A}(x,y)=y^{-1+2s} (y+Ay^{2}) \varphi(x)
$$
where $A>0$ is a constant that will be chosen later and $\varphi(x)$ is the first eigenfunction of $-\Delta_{x}+m^{2}$ in $B_{R/2}(0)$ with zero boundary condition.
Then we can conclude proceeding as in the proof of Proposition $4.11$ in \cite{CS1}.
\end{proof}

\begin{thm}
Let $d\in C^{0,\alpha}(\Gamma^{0}_{R})$ and $v\in H^{1}_{m}(B^{+}_{R},y^{1-2s})$ be a weak solution to 
\begin{equation}
\left\{
\begin{array}{ll}
-\dive(y^{1-2s} \nabla v)+m^{2}y^{1-2s}v = 0 &\mbox{ in } B^{+}_{R} \\
\frac{\partial v}{\partial \nu^{1-2s}}+d(x)v=0 &\mbox{ on } \Gamma^{0}_{R} \\
v \geq 0 &\mbox{ on } B^{+}_{R}
\end{array}.
\right.
\end{equation}
Then $v> 0$ in $B^{+}_{R}\cup \Gamma^{0}_{R}$ unless $v\equiv 0$ in $B^{+}_{R}$.
\end{thm}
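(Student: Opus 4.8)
The plan is to reproduce, for the Robin problem, the argument already indicated in the Remark following the weak maximum principle: combine the classical interior strong maximum principle with the Hopf lemma of Theorem \ref{hopf}. Throughout, assume $v\not\equiv 0$ in $B^{+}_{R}$; we must show $v>0$ on $B^{+}_{R}\cup\Gamma^{0}_{R}$.

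\emph{Step 1 (regularity, so that the Robin condition is meaningful pointwise).} Since $d\in C^{0,\alpha}(\Gamma^{0}_{R})$, in particular $d\in L^{q}(\Gamma^{0}_{R})$ for every $q$, the boundary condition has the form $\frac{\partial v}{\partial\nu^{1-2s}}=a(x)v+b(x)$ with $a=-d$ and $b\equiv 0$, and the bulk equation has zero right-hand side; hence Theorem \ref{thm7} gives $v\in C^{0,\alpha}(\overline{B^{+}_{R/2}})$. Consequently the boundary datum $g(x):=-d(x)v(x)$ lies in $C^{0,\gamma}(\Gamma^{0}_{R/2})$ for a suitable $\gamma\in[0,2-2s)$, so Theorem \ref{thm9}, applied on a slightly smaller ball, yields that $y^{1-2s}\partial_{y}v$ is H\"older continuous up to the flat boundary near every interior point of $\Gamma^{0}_{R}$. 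In particular the conormal derivative $\frac{\partial v}{\partial\nu^{1-2s}}=-\lim_{y\to0}y^{1-2s}v_{y}$ is well defined and continuous there, and the identity $\frac{\partial v}{\partial\nu^{1-2s}}=-d(x)v$ holds pointwise.

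\emph{Step 2 (interior positivity).} On the open half-ball $B^{+}_{R}$ the weight $y^{1-2s}$ is bounded and bounded away from zero on each compact subset, so $-\dive(y^{1-2s}\nabla\cdot)+m^{2}y^{1-2s}$ is there a strictly elliptic operator with bounded coefficients and nonnegative zeroth order coefficient (we use only $m^{2}y^{1-2s}\geq0$). Since $v\geq0$ solves the homogeneous equation in the connected set $B^{+}_{R}$, the classical strong maximum principle for strictly elliptic operators gives that either $v\equiv0$ in $B^{+}_{R}$ or $v>0$ in $B^{+}_{R}$. The first alternative is impossible: $v$ is continuous on $\overline{B^{+}_{R}}$ by Step 1, so it would then vanish identically, contradicting the hypothesis. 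Hence $v>0$ throughout $B^{+}_{R}$.

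\emph{Step 3 (positivity up to $\Gamma^{0}_{R}$, via Hopf).} Suppose, for contradiction, that $v(x_{0},0)=0$ for some $(x_{0},0)\in\Gamma^{0}_{R}$. Choose $\rho,h>0$ small enough that the half-cylinder $B_{\rho}(x_{0})\times(0,h)$ is contained in $B^{+}_{R}$; translating $x_{0}$ to the origin and rescaling the height to $1$ (which preserves the structure of the equation up to harmless constants), the restriction of $v$ meets all hypotheses of Theorem \ref{hopf}: it solves $-\dive(y^{1-2s}\nabla v)+m^{2}y^{1-2s}v=0\leq0$, it is strictly positive inside the cylinder by Step 2, it vanishes at the origin, it is continuous on the closed cylinder, and $y^{1-2s}v_{y}$ is continuous up to the flat face (Step 1) and continuous elsewhere by interior elliptic regularity. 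Theorem \ref{hopf} then yields
$$
\limsup_{y\to0^{+}}\Bigl(-y^{1-2s}\frac{v(x_{0},y)}{y}\Bigr)<0 .
$$
This contradicts $\frac{\partial v}{\partial\nu^{1-2s}}(x_{0},0)=-d(x_{0})v(x_{0},0)=0$: if the conormal derivative vanishes, then $t^{1-2s}v_{y}(x_{0},t)\to0$, and integrating $v_{y}(x_{0},t)=t^{2s-1}\bigl(t^{1-2s}v_{y}(x_{0},t)\bigr)$ from $0$ to $y$ (using $v(x_{0},0)=0$) gives $y^{1-2s}v(x_{0},y)/y\to0$, in conflict with the displayed strict inequality. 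Hence $v(x_{0},0)>0$; as $x_{0}$ was arbitrary, $v>0$ on $\Gamma^{0}_{R}$, and together with Step 2 this proves $v>0$ in $B^{+}_{R}\cup\Gamma^{0}_{R}$, unless $v\equiv0$ in $B^{+}_{R}$.

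I do not expect a genuine obstacle, since the proof merely assembles results already established. The points that require care are the regularity bootstrap of Step 1 (ensuring the Robin condition and the conormal derivative make sense pointwise), placing a true cylinder — not the half-ball — around the candidate boundary zero so that Theorem \ref{hopf} applies verbatim, and the short sign computation at the end of Step 3 that converts the Hopf conclusion into a contradiction with the vanishing conormal derivative.
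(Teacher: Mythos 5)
Your proposal is correct and follows essentially the same route as the paper: Theorems \ref{thm7} and \ref{thm9} to make the Robin condition pointwise, the classical strong maximum principle for interior positivity, and the Hopf lemma of Theorem \ref{hopf} at a putative boundary zero to contradict the vanishing of the conormal derivative. Your version merely spells out the details the paper leaves implicit (the cylinder around the boundary zero and the sign computation converting the Hopf conclusion into the contradiction), so no further comment is needed.
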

\begin{proof}
By using Theorem \ref{thm7} and Theorem \ref{thm9} we know that $v$ and $y^{1-2s}v_{y}$ are $C^{0,\alpha}$ up to the boundary. So the equation  $\frac{\partial v}{\partial \nu^{1-2s}}+d(x)v=0$ is satisfied pointwise on $ \Gamma^{0}_{R}$. 
If $u$ is not identically 0 in $B^{+}_{R}$ then $u>0$ n $B^{+}_{R}$  by the strong maximum principle for the operator $L(v)=-\dive(y^{1-2s} \nabla v)+m^{2}y^{1-2s}v$. If $u(x_{0},0)=0$ at some point $(x_{0},0)\in \Gamma^{0}_{R}$, then $\frac{\partial v}{\partial y^{1-2s}}(x_{0},0)<0$. This gives a contradiction.
\end{proof}

\section{existence of ground state}

In this section we prove the existence of a ground state to (\ref{P}). 

Let us consider the following functional
\begin{equation}\label{I}
\mathcal{I}_{m}(v) =\frac{1}{2} \iint_{\R^{N+1}_{+}} y^{1-2s} (|\nabla v|^{2}+m^{2} v^{2}) dx dy-\frac{m^{2s}}{2} \int_{\R^{N}} v^{2}+\frac{\mu}{2} \int_{\R^{N}} v^{2} dx-\frac{1}{p} \int_{\R^{N}} |v|^{p} dx
\end{equation}
defined for any $v \in H_{m}^{1}(\R^{N+1}_{+}, y^{1-2s})$.

Firstly we note that 
$$
\iint_{\R^{N+1}_{+}} y^{1-2s} (|\nabla v|^{2}+m^{2} v^{2}) dx dy+(\mu-m^{2s})  \int_{\R^{N}} v^{2} dx
$$
is equivalent to the standard norm in $H_{m}^{1}(\R^{N+1}_{+}, y^{1-2s})$
$$
||v||_{H_{m}^{1}(\R^{N+1}_{+}, y^{1-2s})}^{2}=\iint_{\R^{N+1}_{+}} y^{1-2s} (|\nabla v|^{2}+m^{2} v^{2}) dx dy.
$$
In fact, if $\mu \geq m^{2s}$ then we have
$$
\iint_{\R^{N+1}_{+}} y^{1-2s} (|\nabla v|^{2}+m^{2} v^{2}) dx dy+(\mu-m^{2s})  \int_{\R^{N}} v^{2} dx \geq ||v||_{H_{m}^{1}(\R^{N+1}_{+}, y^{1-2s})}^{2}
$$
and by using (\ref{SI}) we get
$$
\iint_{\R^{N+1}_{+}} y^{1-2s} (|\nabla v|^{2}+m^{2} v^{2}) dx dy+(\mu-m^{2s})  \int_{\R^{N}} v^{2} dx \leq \Bigl(1+\frac{\mu-m^{2s}}{m^{2s}}\Bigr) ||v||_{H_{m}^{1}(\R^{N+1}_{+}, y^{1-2s})}^{2}.
$$
Now, we suppose $\mu<m^{2s}$.

Then
$$
\iint_{\R^{N+1}_{+}} y^{1-2s} (|\nabla v|^{2}+m^{2} v^{2}) dx dy+(\mu-m^{2s})  \int_{\R^{N}} v^{2} dx \leq ||v||_{H_{m}^{1}(\R^{N+1}_{+}, y^{1-2s})}^{2}
$$
and by (\ref{SI})
$$
\iint_{\R^{N+1}_{+}} y^{1-2s} (|\nabla v|^{2}+m^{2} v^{2}) dx dy+(\mu-m^{2s})  \int_{\R^{N}} v^{2} dx \geq \Bigl(1+\frac{\mu-m^{2s}}{m^{2s}}\Bigr) ||v||_{H_{m}^{1}(\R^{N+1}_{+}, y^{1-2s})}^{2}.
$$
Thus
\begin{align}\label{equivalent}
C_{1}(m,s,\mu)||v||_{H_{m}^{1}(\R^{N+1}_{+}, y^{1-2s})}^{2}&\leq ||v||_{H_{m}^{1}(\R^{N+1}_{+}, y^{1-2s})}^{2}+(\mu-m^{2s})  |v|_{L^{2}(\R^{N})}^{2} \nonumber \\
&\leq C_{2}(m,s,\mu) ||v||_{H_{m}^{1}(\R^{N+1}_{+}, y^{1-2s})}^{2}
\end{align}
and we set 
$$
||v||_{e,m}^{2}:=\iint_{\R^{N+1}_{+}} y^{1-2s} (|\nabla v|^{2}+m^{2} v^{2}) dx dy+(\mu-m^{2s})  \int_{\R^{N}} v^{2} dx.
$$
Now, in order to prove Theorem 1, we minimize $\mathcal{I}_{m}$ on the following Nehari manifold
$$
\mathcal{N}_{m}=\{ v\in H_{m}^{1}(\R^{N+1}_{+}, y^{1-2s})\setminus \{0\}: \mathcal{J}_{m}(v)=0 \}.
$$
where $\mathcal{J}_{m}(v)=\mathcal{I}_{m}'(v)v$ that is
\begin{align*}
\J_{m}(v)&= \iint_{\R^{N+1}_{+}} y^{1-2s} (|\nabla v|^{2}+m^{2} v^{2}) dx dy-m^{2s} \int_{\R^{N}} v^{2}+\mu \int_{\R^{N}} v^{2} dx- \int_{\R^{N}} |v|^{p} dx\\
&=||v||_{e,m}^{2}-\int_{\R^{N}} |v|^{p} dx.
\end{align*}
Finally we define
$$
c_{m}=\min_{v\in \mathcal{N}_{m}} \I_{m}(v).
$$

\begin{proof}(proof of Theorem 1)
We divide the proof into several steps.

{\it Step 1} The set $\mathcal{N}_{m}$ is not empty. 

Fix $v\in H_{m}^{1}(\R^{N+1}_{+}, y^{1-2s})\setminus \{0\}$. Then
$$
h(t):=\I_{m}(tv)=\frac{t^{2}}{2}||v||^{2}_{e,m}-\frac{t^{p}}{p}|v|_{L^{p}(\R^{N})}^{p}
$$
achieves its maximum in some $\tau>0$. By differentiating $h$ with respect to $t$ we have $\I_{m}'(\tau v)v=0$ and $\tau v\in \mathcal{N}_{m}$. 

{\it Step 2} Selection of an adequate minimizing sequence. 

We prove that there exists  a radially symmetric function $v\in \mathcal{N}_{m}$ such that $\I_{m}(v)=c_{m}$.

Let $(v_{j})\subset \mathcal{N}_{m}$ be a minimizing sequence for $\I_{m}$ and $u_{j}$ its trace. Let $\tilde{u}_{j}$ be the symmetric-decreasing rearrangement of $u_{j}$.
It is known (see \cite{Sbessel}) that 
\begin{align}\label{decrearr}
\int_{\R^{N}} (|k|^{2} + m^{2})^{s} |\mathcal{F}\tilde{u}_{j} (k)|^{2} dk \leq \int_{\R^{N}} (|k|^{2} + m^{2})^{s} |\mathcal{F}u_{j} (k)|^{2} dk
\end{align}
and $|\tilde{u}_{j}|_{L^{q}(\R^{N})}=|u_{j}|_{L^{q}(\R^{N})}$ for any $q\geq 1$. Now, let $\tilde{v}_{j}$ be the unique solution to 
\begin{equation}
\left\{
\begin{array}{lll}
-\dive(y^{1-2s} \nabla \tilde{v}_{j})+m^{2} \tilde{v}_{j} =0 &\mbox{ in } \R^{N+1}_{+}\\
\tilde{v}_{j}(x,0)=\tilde{u}_{j}(x)  &\mbox{ on } \partial \R^{N+1}_{+}
\end{array}.
\right.
\end{equation}

We recall that 
\begin{equation}\label{vincebello}
||\tilde{v}_{j}||_{H_{m}^{1}(\R^{N+1}_{+}, y^{1-2s})}=|\tilde{u}_{j}|_{H^{s}_{m}(\R^{N})},
\end{equation}
by using $(2)$ in Theorem \ref{thmterry}.  

Taking into account (\ref{SI}), (\ref{decrearr}) and (\ref{vincebello}) we get 
\begin{align*}
\iint_{\R^{N+1}_{+}}  y^{1-2s} (|\nabla \tilde{v}_{j}|^{2} +m^{2} \tilde{v}_{j}^{2})\, dxdy &= \int_{\R^{N}} (|k|^{2} + m^{2})^{s} |\mathcal{F}\tilde{u}_{j} (k)|^{2} dk \\
&\leq \int_{\R^{N}} (|k|^{2} + m^{2})^{s} |\mathcal{F}u_{j} (k)|^{2} dk \\
&\leq \iint_{\R^{N+1}_{+}}  y^{1-2s} (|\nabla v_{j}|^{2} +m^{2} v_{j}^{2})\, dxdy, 
\end{align*}
so we deduce that
$$
\J_{m}(\tilde{v}_{j})  \leq \J_{m}(v_{j}) =0 \,
\mbox{ and } \, \I_{m}(\tilde{v}_{j}) \leq \I_{m}(v_{j}). 
$$
Proceeding as in the proof of Step $1$, we can find $t_{j}>0$ such that $\J_{m}(t_{j} \tilde{v}_{j})=0$. By using the fact that $\J_{m}(\tilde{v}_{j}) \leq 0$ we can see that $t_{j}\leq 1$. 

Since $\J_{m}(v)=\I_{m}'(v) \, v$ we have
\begin{align*}
0&=\J_{m}(t_{j} \tilde{v}_{j})\\
&= \Bigl [\I_{m}(t_{j} \tilde{v}_{j}) + \frac{1}{p} \int_{\R^{N}} |t \tilde{v}_{j}|^{p}  dx - \frac{1}{2} \int_{\R^{N}} |t_{j} \tilde{v}_{j}|^{p} dx\Bigr]
\end{align*}
and by using the fact that $0<t_{j}\leq 1$ we obtain 
\begin{align*}
\I_{m}(t_{j} \tilde{v}_{j}) &= \Bigl(\frac{1}{2}-\frac{1}{p} \Bigr) \int_{\R^{N}} |t_{j} \tilde{v}_{j}|^{p} dx \\
&\leq \Bigl(\frac{1}{2}-\frac{1}{p} \Bigr) \int_{\R^{N}} |\tilde{v}_{j}|^{p} dx \\ 
&= \Bigl(\frac{1}{2}-\frac{1}{p} \Bigr) \int_{\R^{N}} |v_{j}|^{p} dx \\ 
&=\I(v_{j}).
\end{align*}
Then $w_{j} :=t_{j} \tilde{v}_{j} $ is a minimizing sequence, radially symmetric with respect to $x$, of $\I_{m}$ on $\mathcal{N}_{m}$. By using (\ref{equivalent}) we get
\begin{align*}
\Bigl(\frac{1}{2}-\frac{1}{p}\Bigr) C_{1}(m,s,\mu)||w_{j}||_{H_{m}^{1}(\R^{N+1}_{+}, y^{1-2s})}^{2}\leq \Bigl(\frac{1}{2}-\frac{1}{p}\Bigr) ||w_{j}||^{2}_{e,m} =\I_{m}(w_{j}) <C
\end{align*}   
then, by using Theorem \ref{compactradial} we can assume that 
\begin{align}
&w_{j}\rightharpoonup w \mbox{ in }  H_{m}^{1}(\R^{N+1}_{+}, y^{1-2s}) \label{convergence1} \\
&w_{j} (\cdot, 0) \rightarrow w(\cdot, 0) \mbox{ in } L^{q}(\R^{N}) \quad \forall q\in \Bigl(2, \frac{2N}{N-2s} \Bigr) \label{convergence2}.  
\end{align}
Then we deduce that $\I_{m}(w)\leq c$ and $\J_{m}(w)\leq 0$. Now we claim that $w$ is not identically zero. We check this, we assume by contradiction that $w= 0$. By using the fact that $\J_{m}(w_{j})=0$ and (\ref{SI}), we can see that 
\begin{align*}
\int_{\R^{N}} |w_{j}|^{p} \, dx &= \iint_{\R^{N+1}_{+}} y^{1-2s} (|\nabla w_{j}|^{2}+m^{2} w_{j}^{2}) \, dx dy+(\mu-m^{2s})  \int_{\R^{N}} w_{j}^{2} \, dx \\
&\geq  C_{1}(m,s,\mu)||w_{j}||^{2}_{H_{m}^{1}(\R^{N+1}_{+}, y^{1-2s})} \\
&\geq C(m,s,\mu,N,p)  \Bigl(\int_{\R^{N}} |w_{j}|^{p} \, dx\Bigr)^{\frac{2}{p}}.
\end{align*} 
By using (\ref{convergence2}) we deduce that  $|w|_{L^{p}(\R^{N})}\geq C(m,s,\mu,N,p)^{1/p-2}>0$, which gets a contradiction.  Then, we can find $\tau \in (0, 1]$ such that $\I_{m}(\tau w) \leq c $ and $\J_{m}(\tau w) =0$. 

{\it Step 3} Conclusion.
Let $v$ be the minimizer obtained above. By using the fact that $v\in \mathcal{N}_{m}$ we have
\begin{align*}
\J_{m}'(v)v &= 2  \Bigl(\iint_{\R^{N+1}_{+}} y^{1-2s} (|\nabla v|^{2}+m^{2} v^{2}) \, dx dy+(\mu-m^{2s})  \int_{\R^{N}} v^{2} \, dx\Bigr) - p \int_{\R^{N}} |v|^{p} dx \\
&= (2-p) \int_{\R^{N}} |v|^{p} dx \neq 0. 
\end{align*}
As a consequence we can find a Lagrange multiplier $\lambda \in \R$ such that 
\begin{equation}\label{lagrange}
\I_{m}'(v) \varphi= \lambda \J_{m}'(v)\varphi
\end{equation}
for any $\varphi \in X_{rad}^{m}$. Taking $\varphi=v$ in (\ref{lagrange}) we deduce that $\lambda=0$ and $v$ is a nontrivial solution to (\ref{fracvaldinoci}).     
 
\end{proof}

\section{regularity and symmetry of solution to (\ref{P})}

\begin{lem}\label{lemmino}

Let $v\in H^{1}_{m}(\R^{N+1}_{+}, y^{1-2s})$ be a weak solution to 
\begin{equation}
\left\{
\begin{array}{ll}
-\dive(y^{1-2s} \nabla v)+m^{2}y^{1-2s}v =0 &\mbox{ in } \R^{N+1}_{+} \\
\frac{\partial v}{\partial \nu^{1-2s}}=m^{2s}v+f(x,v)  &\mbox{ on } \partial \R^{N+1}_{+}
\end{array},
\right.
\end{equation}
where $f(x,v)=-\mu v+|v|^{p-2}v$.
Then $v(\cdot,0)\in L^{q}(\R^{N})$ for all $q\in [2,\infty]$.
\end{lem}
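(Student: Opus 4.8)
The plan is to deduce the integrability of the trace $u:=v(\cdot,0)$ by combining the trace--Sobolev estimate (\ref{SI}) with the local regularity statement in Theorem \ref{thm7}, the point being that the strict subcriticality $p<\frac{2N}{N-2s}$ makes the boundary potential exactly $L^{q_0}$ with $q_0>\frac{N}{2s}$. Write $2^{*}_{s}:=\frac{2N}{N-2s}$. Since $v\in H^{1}_{m}(\R^{N+1}_{+},y^{1-2s})$ we have $u\in H^{s}_{m}(\R^{N})$, hence $u\in L^{2}(\R^{N})\cap L^{2^{*}_{s}}(\R^{N})$ by (\ref{SI}). On $\partial\R^{N+1}_{+}$ the boundary condition can be written in the linear form
$$
\frac{\partial v}{\partial \nu^{1-2s}}=a(x)\,v,\qquad a(x):=m^{2s}-\mu+|u(x)|^{p-2}.
$$
Setting $q_{0}:=\frac{2N}{(N-2s)(p-2)}$ one has $(p-2)q_{0}=2^{*}_{s}$, so that $|u|^{p-2}\in L^{q_{0}}(\R^{N})$, and moreover $q_{0}>\frac{N}{2s}$ — this inequality is precisely $p<2^{*}_{s}$ rewritten. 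Consequently, denoting by $\Gamma^{0}_{R}(x_{0})$ and $B^{+}_{R}(x_{0})$ the translates of $\Gamma^{0}_{R}$ and $B^{+}_{R}$ by $x_{0}$ in the $x$-variable, for every $R>0$ and every $x_{0}\in\R^{N}$ we get $a\in L^{q_{0}}(\Gamma^{0}_{R}(x_{0}))$ with $\|a\|_{L^{q_{0}}(\Gamma^{0}_{R}(x_{0}))}\le |m^{2s}-\mu|\,|\Gamma^{0}_{R}|^{1/q_{0}}+\big\||u|^{p-2}\big\|_{L^{q_{0}}(\R^{N})}$, a bound depending on $R$ but not on $x_{0}$.

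Next I would localise. For each $x_{0}\in\R^{N}$ the restriction of $v$ to $B^{+}_{R}(x_{0})$ is a weak solution of the problem in Theorem \ref{thm7} with coefficient $a(x)$ as above and all other data zero, and $v\in H^{1}_{m}(B^{+}_{R}(x_{0}),y^{1-2s})$ because $v\in H^{1}_{m}(\R^{N+1}_{+},y^{1-2s})$. Since $q_{0}>\frac{N}{2s}$, Theorem \ref{thm7} (in its quantitative form) gives $v\in C^{0,\alpha}(\overline{B^{+}_{R/2}(x_{0})})$ together with an estimate
$$
\|v\|_{L^{\infty}(B^{+}_{R/2}(x_{0}))}\le C\,\|v\|_{H^{1}_{m}(B^{+}_{R}(x_{0}),y^{1-2s})},
$$
where $C=C(N,s,R)$ depends on $a$ only through $\sup_{x_{0}}\|a\|_{L^{q_{0}}(\Gamma^{0}_{R}(x_{0}))}<\infty$, hence not on $x_{0}$. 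Letting $x_{0}$ range over $\R^{N}$ and using $\|v\|_{H^{1}_{m}(B^{+}_{R}(x_{0}),y^{1-2s})}\le\|v\|_{H^{1}_{m}(\R^{N+1}_{+},y^{1-2s})}$, one obtains $u=v(\cdot,0)\in L^{\infty}(\R^{N})$ (in fact $u(x)\to0$ as $|x|\to\infty$, since the right-hand side above tends to $0$). Finally the interpolation inequality $|u|_{L^{q}(\R^{N})}\le |u|_{L^{2}(\R^{N})}^{2/q}\,|u|_{L^{\infty}(\R^{N})}^{1-2/q}$ yields $u\in L^{q}(\R^{N})$ for all $q\in[2,\infty]$.

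A more self-contained route to $u\in L^{q}(\R^{N})$ for every finite $q$ is a Brezis--Kato/Moser iteration: test the weak formulation with $\varphi=v\,\min(|v|,T)^{2\beta}$, bound $\|v\min(|v|,T)^{\beta}\|^{2}_{H^{1}_{m}(\R^{N+1}_{+},y^{1-2s})}$ from below by $C_{2^{*}_{s},s,N}\,|u\min(|u|,T)^{\beta}|^{2}_{L^{2^{*}_{s}}(\R^{N})}$ via (\ref{SI}), split $\int_{\R^{N}}|u|^{p-2}\,|u\min(|u|,T)^{\beta}|^{2}$ over $\{|u|\le M\}$ and $\{|u|>M\}$ so that, by absolute continuity of $\int|u|^{(p-2)q_{0}}$, the piece over $\{|u|>M\}$ is absorbed on the left for $M$ large, then let $T\to\infty$ by Fatou and iterate the resulting gain of integrability; the lower order term $(m^{2s}-\mu)\int_{\R^{N}}u^{2}\min(|u|,T)^{2\beta}$ is harmless since at each step it is already controlled by the integrability known from the previous step. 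In either approach the one delicate point is promoting local $L^{\infty}$ information to a \emph{global} $L^{\infty}(\R^{N})$ bound: for the first approach this is exactly the uniformity in $x_{0}$ of the constant in Theorem \ref{thm7}, which is available precisely because $|u|^{p-2}\in L^{q_{0}}(\R^{N})$ globally with $q_{0}>\frac{N}{2s}$, i.e. because $p$ is strictly subcritical; for the second, it is the polynomial-in-$\beta$ growth of the Moser constants. The remaining computations are routine.
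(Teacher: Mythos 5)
Your second, ``self-contained'' route is essentially the paper's own proof. The paper tests the weak formulation with $v\,v_K^{2\beta}$, $v_K=\min\{|v|,K\}$, writes $|v|^{p-2}\le 1+h$ with $h\in L^{N/2s}(\R^N)$ (this is exactly where $p<\frac{2N}{N-2s}$ enters), splits $\int h|v|^{2(\beta+1)}$ over $\{h\le M\}$ and $\{h>M\}$ and absorbs the second piece on the left via (\ref{SI}) for $M$ large, lets $K\to\infty$ by monotone convergence, and bootstraps starting from $v(\cdot,0)\in L^{2N/(N-2s)}$ to get all finite $q$; then, using that now $h\in L^{N/s}(\R^N)$, it runs the Moser iteration with constants $M_\beta\le C(1+\beta)^2$ whose infinite product converges, giving the $L^\infty$ bound. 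Your sketch (truncation in $T$, splitting over $\{|u|\le M\}$ and $\{|u|>M\}$, Fatou, polynomial growth of the iteration constants) matches this in all essentials, so that route is sound and coincides with the paper.

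Your primary route (freezing the boundary coefficient $a(x)=m^{2s}-\mu+|u(x)|^{p-2}\in L^{q_0}_{loc}$ with $q_0=\frac{2N}{(N-2s)(p-2)}>\frac{N}{2s}$ and covering $\R^N$ by translated half-balls) is genuinely different from the paper, and the exponent arithmetic is correct: $(p-2)q_0=\frac{2N}{N-2s}$ and $q_0>\frac{N}{2s}$ is precisely subcriticality. But as written it has a gap: Theorem \ref{thm7} is stated in the paper purely qualitatively ($v\in C^{0,\alpha}(\overline{B^+_{R/2}})$), with no estimate at all, and in particular no assertion that the constant depends on $a$ only through $\|a\|_{L^{q}(\Gamma^0_R)}$ rather than on $a$ itself or on the center of the ball. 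The uniformity in $x_0$ is the whole point of your argument (you flag this yourself), and it cannot be read off the statement available in the paper; to justify it you would have to prove a quantitative local boundedness estimate, which in practice means carrying out the same weighted De Giorgi--Moser iteration you describe in your second route, merely localized. Note also that the paper gets the decay $u(x)\to0$ as $|x|\to\infty$ only later, in Theorem \ref{reg}, via Bessel potential theory, so your parenthetical decay claim is not needed for this lemma. The final interpolation between $L^2$ and $L^\infty$ is of course fine.
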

\begin{proof}
We proceed as in the proof of Theorem 3.2. in \cite{CN}. 
Since $v$ is a weak solution to (\ref{P}), we know that
\begin{equation}\label{criticpoint}
\iint_{\R^{N+1}_{+}} y^{1-2s}(\nabla v \nabla \eta+m^{2}v \eta) \, dxdy=\int_{\R^{N}} m^{2s}v\eta+f(x,v)\eta \,dx
\end{equation}
for all $\eta\in H^{1}_{m}(\R^{N+1}_{+}, y^{1-2s})$.

Let $w=vv^{2\beta}_{K}\in H^{1}_{m}(\R^{N+1}_{+}, y^{1-2s})$ where $v_{K}=\min\{|v|,K\}$, $K>1$ and $\beta\geq 0$.
Taking $\eta=w$ in (\ref{criticpoint}) we deduce that 
\begin{align}\label{conto1}
\iint_{\R^{N+1}_{+}}  &y^{1-2s}v^{2\beta}_{K}(|\nabla v|^{2}+m^{2}v^{2}) \, dxdy+\iint_{D_{K,T}} 2\beta y^{1-2s}v^{2\beta}_{K} |\nabla v|^{2} \, dx dy  \nonumber \\
&=m^{2s}\int_{\R^{N}} v^{2} v^{2\beta}_{K} \,dx+ \int_{\R^{N}} f(x,v)vv^{2\beta}_{K} \,dx 
\end{align}
where $D_{K,T}=\{(x,y)\in \R^{N+1}_{+}: |v(x,y)|\leq K\}$. \\
It is easy to see that
\begin{align}\label{conto2}
\iint_{\R^{N+1}_{+}} &y^{1-2s}|\nabla (vv_{K}^{\beta})|^{2} dxdy \nonumber \\
&=\iint_{\R^{N+1}_{+}} y^{1-2s}v_{K}^{2\beta} |\nabla v|^{2} dxdy+\iint_{D_{K,T}} (2\beta+\beta^{2}) y^{1-2s}v_{K}^{2\beta} |\nabla v|^{2} dxdy.
\end{align}
Then, putting together (\ref{conto1}) and (\ref{conto2}) we get 
\begin{align}\label{S1}
&||vv_{K}^{\beta}||_{H^{1}_{m}(\R^{N+1}_{+}, y^{1-2s})}^{2}\\
&=\iint_{\R^{N+1}_{+}} y^{1-2s}[|\nabla (vv_{K}^{\beta})|^{2}+m^{2}v^{2}v_{K}^{2\beta}] dxdy \nonumber \\
&=\iint_{\R^{N+1}_{+}} y^{1-2s}v_{K}^{2\beta}[ |\nabla v|^{2}+m^{2}v^{2}] dxdy+\iint_{D_{K,T}} 2\beta \Bigl(1+\frac{\beta}{2}\Bigr) y^{1-2s}v_{K}^{2\beta} |\nabla v|^{2} dxdy \nonumber \\
&\leq c_{\beta} \Bigl[\iint_{\R^{N+1}_{+}} y^{1-2s}v_{K}^{2\beta}[ |\nabla v|^{2}+m^{2}v^{2}] dxdy+\iint_{D_{K,T}} 2\beta y^{1-2s}v_{K}^{2\beta} |\nabla v|^{2} dxdy\Bigr] \nonumber \\
&=c_{\beta} \int_{\R^{N}} m^{2s}v^{2}v_{K}^{2\beta} + f(x,v)v v_{K}^{2\beta} \,dx 
\end{align}
where $c_{\beta}=1+\frac{\beta}{2}$.
Then we deduce that
\begin{equation*}
m^{2s}v^{2}v_{K}^{2\beta} + f(x,v)v v_{K}^{2\beta}\leq (m^{2s}+C)v^{2}v_{K}^{2\beta}+C|v|^{p-2}v^{2}v_{K}^{2\beta} \, \mbox{ on } \R^{N}.
\end{equation*}
Now, we prove that
\begin{equation*}
|v|^{p-2}\leq 1+h \mbox{ on } \R^{N}
\end{equation*}
for some $h\in L^{N/2s}(\R^{N})$.
Firstly, we observe that
$$
|v|^{p-2}=\chi_{\{|v|\leq 1\}}|v|^{p-2}+\chi_{\{|v|>1\}}|v|^{p-2}\leq 1+\chi_{\{|v|>1\}}|v|^{p-2}\ \mbox{ on } \R^{N}.
$$
If $(p-2)\frac{N}{2s}<2$ then 
$$
\int_{\R^{N}} \chi_{\{|v|>1\}}|v|^{\frac{N}{2s}(p-2)} dx \leq \int_{\R^{N}} \chi_{\{|v|>1\}}|v|^{2} dx<\infty
$$
while if $2\leq (p-2)\frac{N}{2s}$ we have that $(p-2)\frac{N}{2s}\in [2,\frac{2N}{N-2s}]$.

Therefore, there exist a constant $c>0$ and a function $h\in L^{N/2s}(\R^{N})$, $h\geq 0$ and independent of $K$ and $\beta$, such that
\begin{equation}\label{S2}
m^{2s}v^{2}v_{K}^{2\beta} + f(x,v)vv_{K}^{2\beta}\leq (c+h)v^{2}v_{K}^{2\beta} \mbox{ on } \R^{N}.
\end{equation}
Taking into account (\ref{S1}) and (\ref{S2}) we have
\begin{equation*}
||vv_{K}^{\beta}||_{H^{1}_{m}(\R^{N+1}_{+},y^{1-2s})}^{2}\leq c_{\beta} \int_{\R^{N}} (c+h)v^{2}v_{K}^{2\beta} dx,
\end{equation*}
and by Monotone Convergence Theorem ($v_{K}$ is increasing with respect to $K$) we have as $K\rightarrow \infty$
\begin{equation}\label{i1}
|||v|^{\beta+1}||_{H^{1}_{m}(\R^{N+1}_{+},y^{1-2s})}^{2}\leq cc_{\beta} \int_{\R^{N}} |v|^{2(\beta +1)} dx + c_{\beta}\int_{\R^{N}}  h|v|^{2(\beta +1)}dx.
\end{equation}
Fix $M>0$ and let $A_{1}=\{h\leq M\}$ and $A_{2}=\{h>M\}$.

Then
\begin{equation}\label{i2}
\int_{\R^{N}}  h|v(\cdot,0)|^{2(\beta +1)} dx\leq M||v(\cdot,0)|^{\beta+1}|_{L^{2}(\R^{N})}^{2}+\varepsilon(M)||v(\cdot,0)|^{\beta+1}|_{L^{2^{\s}_{s}}(\R^{N})}^{2}
\end{equation}
where $\varepsilon(M)=\Bigl(\int_{A_{2}} h^{N/2s} dx \Bigr)^{\frac{2s}{N}}\rightarrow 0$ as $M\rightarrow \infty$.
Taking into account (\ref{i1}), (\ref{i2}), we get
\begin{equation}\label{regv}
|||v|^{\beta+1}||_{H^{1}_{m}(\R^{N+1}_{+},y^{1-2s})}^{2}\leq c_{\beta}(c+M)||v(\cdot,0)|^{\beta+1}|_{L^{2}(\R^{N})}^{2}+c_{\beta}\varepsilon(M)||v(\cdot,0)|^{\beta+1}|_{L^{2^{\s}_{s}}(\R^{N})}^{2}.
\end{equation}
By using (\ref{SI}) we know that
\begin{equation}\label{S3}
||v(\cdot,0)|^{\beta+1}|_{L^{2^{\s}_{s}}(\R^{N})}^{2}\leq C^{2}_{2^{\s}_{s}}|||v|^{\beta+1}||_{H^{1}_{m}(\R^{N+1}_{+},y^{1-2s})}^{2}.
\end{equation}
Then, choosing $M$ large so that $\varepsilon(M) c_{\beta} C^{2}_{2^{\s}}<\frac{1}{2}$,
and by using $(\ref{regv})$ and $(\ref{S3})$ we obtain
\begin{equation}\label{iter}
||v(\cdot,0)|^{\beta+1}|_{L^{2^{\s}_{s}}(\R^{N})}^{2}\leq 2 C^{2}_{2^{\s}_{s}} c_{\beta}(c+M)||v(\cdot,0)|^{\beta+1}|^{2}_{L^{2}(\R^{N})}.
\end{equation}
Then we can start a bootstrap argument: since $v(\cdot,0)\in L^{\frac{2N}{N-2s}}(\R^{N})$ we can apply (\ref{iter}) with $\beta_{1}+1=\frac{N}{N-2s}$ to deduce that $v(\cdot,0)\in L^{\frac{(\beta_{1}+1)2N}{N-2s}}(\R^{N})=L^{\frac{2N^{2}}{(N-2s)^{2}}}(\R^{N})$. Applying (\ref{iter}) again, after $k$ iterations, we find $v(\cdot,0)\in L^{\frac{2N^{k}}{(N-2s)^{k}}}(\R^{N})$, and so $v(\cdot,0)\in L^{q}(\R^{N})$ for all $q\in[2,\infty)$.

Now we prove that $v(\cdot,0)\in L^{\infty}(\R^{N})$.
Since $v(\cdot,0)\in L^{q}(\R^{N})$ for all $q\in[2,\infty)$ we have that $h\in L^{\frac{N}{s}}(\R^{N})$.

By using generalized H\"older inequality, we can see that
\begin{align}
\int_{\R^{N}} h|v|^{2(\beta+1)} dx &\leq |h|_{L^{\frac{N}{s}}(\R^{N})} ||v|^{\beta+1}|_{L^{2}(\R^{N})}  ||v|^{\beta+1}|_{L^{2^{\s}_{s}}(\R^{N})}  \nonumber \\
&\leq |h|_{L^{\frac{N}{s}}(\R^{N})} \Bigl(\lambda ||v|^{\beta+1}|_{L^{2}(\R^{N})}^{2}+\frac{1}{\lambda}  ||v|^{\beta+1}|_{L^{2^{\s}_{s}}(\R^{N})}^{2}\Bigr)
\end{align}
and by using (\ref{i1}) we deduce
\begin{align}\label{i3}
|||v|^{\beta+1}||_{H^{1}_{m}(\R^{N+1}_{+},y^{1-2s})}^{2}\leq  c_{\beta} (c+ |h|_{L^{\frac{N}{s}}(\R^{N})} \lambda) ||v|^{\beta+1}|_{L^{2}(\R^{N})}^{2}+\frac{c_{\beta}|h|_{L^{\frac{N}{s}}(\R^{N})}}{\lambda}  ||v|^{\beta+1}|_{L^{2^{\s}_{s}}(\R^{N})}^{2}.
\end{align}
Taking $\lambda$ such that
$$
\frac{c_{\beta}|h|_{L^{\frac{N}{s}}(\R^{N})}}{\lambda}C^{2}_{2^{s}_{s}}=\frac{1}{2}
$$
and by using (\ref{SI}), we obtain
\begin{align}
 ||v|^{\beta+1}|_{L^{2^{\s}_{s}}(\R^{N})}^{2} \leq  2c_{\beta}(c+|h|_{L^{\frac{N}{s}}(\R^{N})}\lambda)C^{2}_{2^{s}_{s}}  ||v|^{\beta+1}|_{L^{2}(\R^{N})}^{2}:=M_{\beta} ||v|^{\beta+1}|_{L^{2}(\R^{N})}^{2}. 
\end{align}
Now we can control the dependence on $\beta$ of $M_{\beta}$ as follows
$$
M_{\beta}\leq Cc^{2}_{\beta}\leq C(1+\beta)^{2}\leq M_{0}^{2}e^{2\sqrt{\beta+1}},
$$
which implies that
$$
|v|_{L^{2^{\s}_{s}(\beta+1)}(\R^{N})} \leq M_{0} e^{\frac{1}{\sqrt{\beta+1}}}|v|_{L^{2(\beta+1)}(\R^{N})}.
$$
Iterating this last relation and choosing $\beta_{0}=0$ and $2(\beta_{n+1}+1)=2^{\s}_{s}(\beta_{n}+1)$, we deduce that
$$
|v|_{L^{2^{\s}_{s}(\beta_{n}+1)}(\R^{N})} \leq M_{0}^{\sum_{i=0}^{n} \frac{1}{\beta_{i}+1}} e^{\sum_{i=0}^{n} \frac{1}{\sqrt{\beta_{i}+1}}}|v|_{L^{2(\beta_{0}+1)}(\R^{N})}.
$$
We note that $1+\beta_{n}=(\frac{N}{N-2s})^{n}$, so the series
$$
\sum_{i=0}^{\infty} \frac{1}{\beta_{i}+1} \mbox{ and } \sum_{i=0}^{\infty} \frac{1}{\sqrt{\beta_{i}+1}}
$$
are finite and we get
$$
|v|_{L^{\infty}(\R^{N})}=\lim_{n\rightarrow \infty} |v|_{L^{2^{\s}_{s}(\beta_{n}+1)}(\R^{N})}<\infty.
$$
\end{proof}

\begin{thm}\label{reg}
Let $u\in H^{s}(\R^{N})$ be a solution to (\ref{P}). Then $u\in C^{1,\beta}(\R^{N})$ for some $\beta\in (0,1)$ and $u(x)\rightarrow 0$ as $|x| \rightarrow \infty$.
\end{thm}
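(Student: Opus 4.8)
The plan is to lift $u$ to its extension $v\in H^{1}_{m}(\R^{N+1}_{+},y^{1-2s})$ via Theorem \ref{ext}, so that $v$ is a weak solution of the Neumann problem appearing in Lemma \ref{lemmino} with $f(x,v)=-\mu v+|v|^{p-2}v$, and then run the local Schauder machinery of Section 3. First, I would invoke Lemma \ref{lemmino} to get $u=v(\cdot,0)\in L^{q}(\R^{N})$ for every $q\in[2,\infty]$; in particular $u\in L^{\infty}(\R^{N})$. Consequently the boundary datum can be written as $\frac{\partial v}{\partial\nu^{1-2s}}=a(x)v$ on $\Gamma^{0}_{R}$ with $a(x)=m^{2s}-\mu+|v(x,0)|^{p-2}\in L^{\infty}(\R^{N})\subset L^{q}_{loc}$ for every $q>\tfrac{N}{2s}$, and $d=0$. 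Applying Theorem \ref{thm7} on balls $B^{+}_{R}(x_{0},0)$ centered at arbitrary boundary points gives $v\in C^{0,\alpha}_{loc}(\overline{\R^{N+1}_{+}})$, hence $u\in C^{0,\alpha}_{loc}(\R^{N})$.

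Next I would bootstrap the boundary regularity. Since $u\in C^{0,\alpha}_{loc}$ and $u\in L^\infty$, the function $a(x)=m^{2s}-\mu+|u|^{p-2}$ is locally $C^{0,\gamma}$ for a suitable $\gamma>0$ (here one needs $p\geq 3$ for $t\mapsto |t|^{p-2}$ to be $C^{0,1}$; for $2<p<3$ one gets $C^{0,p-2}$, which still suffices), so $b(x)=a(x)v(x,0)$ lies in $C^{0,\gamma}_{loc}(\Gamma^0_R)$. Then Theorem \ref{thm9} (with $g=b$) yields $y^{1-2s}\partial_y v\in C^{0,\alpha}_{loc}$ up to the boundary, and Theorem \ref{thm8} with $k=1$ upgrades $v$ to $C^{1,\alpha}_{loc}(\overline{B^+_{R/2}})$; restricting to $\{y=0\}$ gives $u\in C^{1,\beta}(\R^{N})$ for some $\beta\in(0,1)$.

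It remains to prove the decay $u(x)\to 0$ as $|x|\to\infty$. The idea is that $u\in H^{s}(\R^{N})\cap L^{\infty}(\R^{N})$ together with the uniform local $C^{1,\beta}$ bound forces $u\to 0$ at infinity: indeed, if not, there is a sequence $x_{j}$ with $|x_{j}|\to\infty$ and $|u(x_{j})|\geq\delta>0$, and by the uniform modulus of continuity $|u|\geq\delta/2$ on balls of fixed radius $\rho$ around $x_{j}$; passing to a subsequence with the balls $B_\rho(x_j)$ disjoint contradicts $u\in L^{2}(\R^{N})$. A cleaner alternative, closer to the paper's comparison-principle toolbox, is to build a radial supersolution: using $u\in L^\infty$ and $\mu>0$, on the region $\{|x|>R_0\}$ where $|u|^{p-2}\le \mu/2$ the extension satisfies $\frac{\partial v}{\partial\nu^{1-2s}}+d(x)v=0$ with $d=\mu-m^{2s}-|u|^{p-2}$ bounded below by a positive constant (shrinking if necessary), and one compares $|v|$ with a multiple of the extension of a fixed decaying function (e.g. the extension associated with a Bessel/Yukawa-type kernel) via the weak maximum principle. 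I expect the decay step to be the main obstacle, since the Schauder/bootstrap part is a routine application of Section 3; making the comparison function explicit and controlling it near the boundary $\{y=0\}$ is the delicate point, so I would lean on the soft compactness argument above as the safer route, reserving the barrier argument for a sharper exponential rate if desired.
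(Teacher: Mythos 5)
Your first step (Lemma \ref{lemmino} giving $u\in L^{q}(\R^{N})$ for all $q\in[2,\infty]$, then Theorem \ref{thm7} with $a=m^{2s}-\mu+|u|^{p-2}\in L^{\infty}$ to get local Hölder continuity of $v$, hence of $u$) is sound and parallels what one would expect. The genuine gap is in the passage to $C^{1,\beta}$. Theorem \ref{thm8} requires $a,b\in C^{k}(\Gamma^{0}_{R})$ with $k\geq 1$, i.e.\ at least $C^{1}$ coefficients, whereas at this stage of your bootstrap you only have $a\in C^{0,\gamma}$ (and for $2<p<3$ the map $t\mapsto|t|^{p-2}$ is merely $C^{0,p-2}$, as you note); so Theorem \ref{thm8} simply does not apply, and Theorem \ref{thm9} only controls the weighted normal derivative $y^{1-2s}\partial_{y}v$, not the tangential $C^{1,\alpha}$ regularity of $v(\cdot,0)$. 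The toolbox of Section 3 contains no estimate that gains $2s$ in the Hölder scale from Hölder Neumann data, and without such a gain the bootstrap cannot cross the $C^{1}$ threshold, in particular when $s\leq 1/2$. The paper closes exactly this gap by leaving the extension and using Bessel potential theory (Stein): writing $(-\Delta+m^{2})^{s}u=g$ with $g=m^{2s}u-\mu u+|u|^{p-2}u\in L^{q}$ for all $q\in[2,\infty]$, one gets $u\in L^{2s}_{q}(\R^{N})$ for every $q<\infty$, hence $u\in C^{0,\alpha}$ \emph{and} $u(x)\to0$ as $|x|\to\infty$; then $g\in C^{0,\alpha}$ (here $t\mapsto|t|^{p-2}t$ is locally Lipschitz since $p>2$), and the mapping property $C^{0,\alpha}\to C^{0,\alpha+2s}$ of the Bessel potential, iterated finitely many times, yields $u\in C^{1,\sigma}$.

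On the decay: your compactness argument ($|u(x_{j})|\geq\delta$ on a divergent sequence plus a translation-uniform modulus of continuity contradicts $u\in L^{2}$) is in principle correct, but it needs the local estimate of Theorem \ref{thm7} in quantitative, translation-invariant form (a bound on the $C^{0,\alpha}$ norm by the global $H^{1}_{m}$ and $L^{\infty}$ data), which the paper never states; the paper instead obtains the vanishing at infinity for free from the embedding of the Bessel potential spaces $L^{2s}_{q}(\R^{N})$. The barrier alternative you sketch is not needed. So the proposal is a genuinely different route, but as written it does not prove the $C^{1,\beta}$ assertion; to repair it you would either have to import a boundary Schauder estimate with a $2s$-gain for the degenerate extension problem, or switch, as the paper does, to the potential-theoretic argument on $\R^{N}$.
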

\begin{proof}
We proceed as in the proof of Lemma 4.4 in \cite{CS1}. Let $g(x)=[m^{2s}u-\mu u+|u|^{p-2}u](x,0)$. By Lemma \ref{lemmino} we know that $g \in L^{q}(\R^{N})$ for any $q\in [2,\infty]$. By Bessel potential theory (see \cite{Stein}) we have $u\in L^{2s}_{q}(\R^{N})$ for any $q<\infty$ so $u\in C^{0,\alpha}(\R^{N})$ for some $\alpha \in (0,1)$ and $u(x)\rightarrow 0$ as $|x| \rightarrow \infty$. Then $g\in C^{0,\alpha}(\R^{N})$. Hence, if $2s+\alpha>1$ then $u\in C^{1,2s+\alpha-1}(\R^{N})$ and if $2s+\alpha\leq 1$ then $u\in C^{0,2s+\alpha}(\R^{N})$. Therefore, iterating the procedure a finite number of times, one gets that $u\in C^{1,\sigma}(\R^{N})$ for some $\sigma \in (0,1)$ depending only on $s$. 
\end{proof}

\begin{thm}
Every ground state $u$ of (\ref{P}) has one sign.
\end{thm}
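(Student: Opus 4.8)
The plan is to run the classical ``pass to $|u|$'' argument in the extended setting, the key point being that the extension strictly lowers the weighted Dirichlet energy unless the competitor already solves the extension equation. Let $u\in H^s_m(\R^N)$ be a ground state of (\ref{P}) and let $v\in H^1_m(\R^{N+1}_+,y^{1-2s})$ be its extension given by Theorem \ref{ext}; then $v$ weakly solves (\ref{fracvaldinoci}), $v\in\mathcal{N}_m$ and $\I_m(v)=c_m$. I would work with three objects: the extension $v$ of $u$; the competitor $|v|$, which lies in $H^1_m(\R^{N+1}_+,y^{1-2s})$ with $|\nabla|v||=|\nabla v|$ a.e., has trace $|u|$, and satisfies $\||v|\|_{H^1_m(\R^{N+1}_+,y^{1-2s})}=\|v\|_{H^1_m(\R^{N+1}_+,y^{1-2s})}$; and the extension $\bar v$ of $|u|\in H^s_m(\R^N)$, the membership being itself a consequence of part $(2)$ of Theorem \ref{thmterry} applied to $|v|$.

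First I would check that $\J_m(\bar v)\le 0$. By part $(2)$ of Theorem \ref{thmterry}, $\bar v$ minimizes $\|\cdot\|_{H^1_m(\R^{N+1}_+,y^{1-2s})}$ over all functions with trace $|u|$, so $\|\bar v\|_{H^1_m}^2\le\||v|\|_{H^1_m}^2=\|v\|_{H^1_m}^2$; since moreover $\int_{\R^N}|u|^2=\int_{\R^N}u^2$ and the $L^p$ term is unchanged, this yields $\|\bar v\|_{e,m}^2\le\|v\|_{e,m}^2$, whence $\J_m(\bar v)\le\J_m(v)=0$. Next, as in the proof of Theorem $1$, I would project onto the Nehari manifold: there is $t>0$ with $t\bar v\in\mathcal{N}_m$, and from $t^{p-2}=\|\bar v\|_{e,m}^2/\int_{\R^N}|u|^p\le\|v\|_{e,m}^2/\int_{\R^N}|u|^p=1$ one gets $t\le 1$. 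Using that $\I_m$ equals $(\tfrac12-\tfrac1p)\int_{\R^N}|\cdot|^p$ on $\mathcal{N}_m$,
\[
c_m\le\I_m(t\bar v)=\Bigl(\tfrac12-\tfrac1p\Bigr)t^p\int_{\R^N}|u|^p\le\Bigl(\tfrac12-\tfrac1p\Bigr)\int_{\R^N}|u|^p=\I_m(v)=c_m,
\]
so every inequality is an equality: $t=1$, $\|\bar v\|_{e,m}^2=\|v\|_{e,m}^2$, and hence $\||v|\|_{H^1_m}^2=\|\bar v\|_{H^1_m}^2=\kappa_s\bigl|\,|u|\,\bigr|_{H^s_m(\R^N)}^2$. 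Thus $|v|$ realizes equality in Theorem \ref{thmterry}$(2)$, so by that equality clause $|v|$ is a weak solution of $-\dive(y^{1-2s}\nabla|v|)+m^2y^{1-2s}|v|=0$ in $\R^{N+1}_+$, and by the uniqueness in Theorem \ref{ext} it coincides with $\bar v$.

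Finally I would invoke the maximum principle. Since $|v|\ge 0$ solves the above equation, which is strictly elliptic away from $\{y=0\}$ with positive zeroth order coefficient $m^2y^{1-2s}$, and $v\not\equiv0$ forces $|v|\not\equiv0$, the strong maximum principle (the Remark following the weak maximum principle) gives $|v|>0$ in $\R^{N+1}_+$. As $v$ is smooth in $\R^{N+1}_+$ and continuous up to $\partial\R^{N+1}_+$ (by Theorem \ref{thm7} together with Lemma \ref{lemmino}), and $\R^{N+1}_+$ is connected, $v$ cannot vanish in $\R^{N+1}_+$ and therefore has a fixed sign there, so $u=v(\cdot,0)$ has a fixed sign; and if one wants $u>0$ or $u<0$ everywhere, the Hopf lemma of Theorem \ref{hopf} additionally excludes zeros of $|v|$ on $\partial\R^{N+1}_+$.

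The step I expect to be the real obstacle is forcing all these inequalities to collapse: one must use that $\bigl|\,|u|\,\bigr|_{H^s_m}\le|u|_{H^s_m}$ is an equality only when $|v|$ already satisfies the extension equation --- which is exactly the equality clause of Theorem \ref{thmterry}$(2)$ --- and combine it with the characterization of $c_m$ on $\mathcal{N}_m$ so that the displayed chain has no slack; it is this rigidity, not the soft functional-analytic part, that makes $|v|$ a genuine solution of the extension problem and puts us in a position to apply the maximum principle. The remaining ingredients --- stability of $H^1_m$ and of traces under $v\mapsto|v|$, the Nehari projection, regularity and continuity of $v$, and the strong maximum and Hopf principles --- are all already available from Theorems \ref{thmterry}, \ref{ext}, \ref{thm7}, \ref{hopf} and Lemma \ref{lemmino}.
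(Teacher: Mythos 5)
Your argument is correct, but it reaches the key fact about $|v|$ by a different mechanism than the paper. The paper's proof is shorter at that point: since $|\nabla|v||=|\nabla v|$ a.e.\ and the trace integrals are unchanged, $|v|\in\mathcal{N}_{m}$ with $\I_{m}(|v|)=c_{m}$, so $|v|$ is itself a minimizer on the Nehari manifold and hence (by the same Lagrange multiplier step as in Theorem 1) a weak solution of the full problem (\ref{fracvaldinoci}), \emph{including} the nonlinear Neumann condition; the paper then assumes a boundary zero $(x_{0},0)$ of $|v|$, applies Theorem \ref{hopf} to get $-y^{1-2s}\partial_{y}|v|(x_{0},0)<0$, and contradicts the pointwise boundary relation $[m^{2s}-\mu]|v|(x_{0},0)+|v|^{p-1}(x_{0},0)=0$, the pointwise sense being supplied by Theorems \ref{thm7} and \ref{thm9}. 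You instead compare with the extension $\bar v$ of $|u|$ and close the chain $c_{m}\le\I_{m}(t\bar v)\le\I_{m}(v)=c_{m}$ to force equality in Theorem \ref{thmterry}$(2)$, which gives you the interior equation for $|v|$ and the identity $|v|=\bar v$ without re-running the Nehari criticality argument; then the interior strong maximum principle plus connectedness yields the sign of $v$, and Hopf is reserved for strictness. This is a legitimate and somewhat more self-contained route (it also produces the structural fact that the absolute value of the extension of $u$ is the extension of $|u|$), but note that the rigidity step alone does \emph{not} give the Neumann condition for $|v|$, which you still need to contradict Hopf at a boundary zero: you should add that once $v$ has a fixed sign, $|v|=\pm v$ by the uniqueness in Theorem \ref{ext}, so $|v|$ inherits the boundary condition of (\ref{fracvaldinoci}), which vanishes wherever $u$ does; and you should invoke Theorem \ref{thm9} (in addition to Theorem \ref{thm7} and Lemma \ref{lemmino}) so that $y^{1-2s}\partial_{y}|v|$ is continuous up to $\partial\R^{N+1}_{+}$ and the comparison with the Hopf derivative makes pointwise sense. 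With these two remarks inserted, your proof is complete and gives the same conclusion $u>0$ or $u<0$ as the paper.
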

\begin{proof}
Let $v$ be a unique solution to (\ref{extpb}) with data $u$. Then $v\in \mathcal{N}_{m}$ and $\I_{m}(v)=c_{m}$. In particular $|v|\in \mathcal{N}_{m}$ and $\I_{m}(|v|)=\I_{m}(v)$, that is $|v|$ is a weak solution to (\ref{fracvaldinoci}). Then $|v|$ and $y^{1-2s}\partial_{y}|v|$ are continuous up to the boundary.  Let assume by contradiction that $|v|$ achieves its global minimum at $(x_{0},0)\in \partial \R^{N+1}_{+}$. By using Hopf principle we can deduce that $-y^{1-2s}\frac{\partial |v|}{\partial y}(x_{0},0)<0$. This gives a contradiction since 
$$
-y^{1-2s}\frac{\partial |v|}{\partial y}(x_{0},0)=[m^{2s}-\mu] |v|(x_{0},0)+|v|^{p-1}(x_{0},0)=0.
$$
Then $u>0$ or $u<0$ in $\R^{N}$.
\end{proof}

\begin{thm}
Every positive solution $u\in H^{s}(\R^{N})$ of (\ref{P}) is radially symmetric with respect to some point $x_{0}\in \R^{N}$.
\end{thm}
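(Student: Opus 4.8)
The plan is to prove radial symmetry via the moving planes method, applied to the extended function $v$ on the half-space $\R^{N+1}_{+}$ rather than directly to $u$ on $\R^N$; this is the standard way to handle the nonlocal operator (compare the analogous arguments in \cite{CS1} and \cite{CN}). Let $v\in H^1_m(\R^{N+1}_+,y^{1-2s})$ be the unique extension of the positive solution $u$, so that $v>0$ in $\R^{N+1}_+\cup\Gamma^0$ by the strong maximum principle, $v$ decays at infinity together with $u$ (by Theorem \ref{reg}, $u(x)\to 0$ as $|x|\to\infty$, and a Harnack/comparison argument propagates this to $v$), and $v$ solves the local problem \eqref{fracvaldinoci}. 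For a direction, say $e_1$, and $\lambda\in\R$ set $\Sigma_\lambda=\{(x,y)\in\R^{N+1}_+ : x_1<\lambda\}$, let $x^\lambda=(2\lambda-x_1,x_2,\dots,x_N)$ be the reflection of $x$ across $\{x_1=\lambda\}$, and put $v_\lambda(x,y)=v(x^\lambda,y)$ and $w_\lambda=v_\lambda-v$. Since the operator $-\dive(y^{1-2s}\nabla\cdot)+m^2 y^{1-2s}\cdot$ and the mass term are invariant under this reflection, $w_\lambda$ satisfies $-\dive(y^{1-2s}\nabla w_\lambda)+m^2 y^{1-2s} w_\lambda=0$ in $\Sigma_\lambda$, with Neumann data on $\Gamma^0\cap\partial\Sigma_\lambda$ given by
\[
\frac{\partial w_\lambda}{\partial\nu^{1-2s}} = m^{2s}w_\lambda-\mu w_\lambda + \bigl(|u_\lambda|^{p-2}u_\lambda-|u|^{p-2}u\bigr) = c_\lambda(x)\,w_\lambda(x,0)
\]
where, using positivity of $u$ and the mean value theorem, $c_\lambda(x)=m^{2s}-\mu+(p-1)\xi_\lambda(x)^{p-2}$ for some $\xi_\lambda$ between $u(x)$ and $u_\lambda(x)$; note $c_\lambda$ is bounded and, thanks to the decay of $u$, satisfies $c_\lambda(x)\to m^{2s}-\mu$ as $|x|\to\infty$ uniformly in $\lambda$ over any bounded range.

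The argument then runs in the usual two stages. First I would show that for $\lambda$ sufficiently negative, $w_\lambda\ge 0$ in $\Sigma_\lambda$: here one uses that $u$ is small near infinity, so $c_\lambda$ is essentially $m^{2s}-\mu$, which is not necessarily negative — this is a point to be careful about — so instead one tests the equation for $w_\lambda^-=\min\{w_\lambda,0\}$ (which is supported where $u_\lambda<u$, a region contained in $\{|x|\gtrsim|\lambda|\}$), applies the trace inequality \eqref{SI} and the decay of $u$ to absorb the boundary term into the $H^1_m$-norm, and concludes $\|w_\lambda^-\|=0$. Second, I would define $\lambda_0=\sup\{\lambda : w_{\lambda'}\ge 0\text{ in }\Sigma_{\lambda'}\text{ for all }\lambda'\le\lambda\}$ and show $\lambda_0<+\infty$ (again by decay, for $\lambda$ large positive one cannot have $w_\lambda\ge 0$ unless $u$ vanishes, contradicting $u>0$) and that at $\lambda=\lambda_0$ one has $w_{\lambda_0}\equiv 0$. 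The latter is the crux: by continuity $w_{\lambda_0}\ge 0$, and if $w_{\lambda_0}\not\equiv 0$ the strong maximum principle gives $w_{\lambda_0}>0$ in $\Sigma_{\lambda_0}\cup(\Gamma^0\cap\partial\Sigma_{\lambda_0}^{\mathrm{int}})$; one then shows $w_\lambda\ge 0$ persists for $\lambda$ slightly larger than $\lambda_0$, contradicting maximality. This last step combines a compactness argument on a large ball $B^+_R$ (where the positivity of $w_{\lambda_0}$ gives a quantitative lower bound, so that a small perturbation of $\lambda$ keeps $w_\lambda\ge 0$ there, invoking the weak maximum principle and Hopf's lemma, Theorem \ref{hopf}, to exclude a zero on $\Gamma^0$) with the decay estimate outside $B^+_R$ exactly as in the first stage.

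Having obtained $w_{\lambda_0}\equiv 0$, the function $v$ is symmetric about the hyperplane $\{x_1=\lambda_0\}$; since the direction $e_1$ was arbitrary, $v$ — hence $u=v(\cdot,0)$ — is symmetric about $N$ mutually transverse hyperplanes, and a standard argument then forces $u$ to be radially symmetric about the common point $x_0$ of intersection, and monotone decreasing in $|x-x_0|$. I expect the main obstacle to be the stage handling behavior at infinity: because the zero-order coefficient $m^{2s}-\mu$ does not have a definite sign, one cannot quote a ready-made maximum principle in an unbounded domain, and one must genuinely exploit the smallness of $u^{p-2}$ at infinity together with the sharp trace constant in \eqref{SI} to run the $w_\lambda^-$ energy estimate; the extension formalism and Hopf's lemma from Section 3 are exactly what make this work, so the proof only needs to assemble tools already established above.
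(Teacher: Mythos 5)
Your proposal follows essentially the same route as the paper: moving planes applied to the extension $v$ of problem (\ref{fracvaldinoci}), testing the equation for $w_\lambda$ with $w_\lambda^-$, using the trace/Sobolev inequality (\ref{SI}) together with the decay of $u$ to start the plane, and then the strong maximum principle, the Hopf lemma of Theorem \ref{hopf}, and a measure-smallness (compactness on a ball plus decay outside) argument at the critical plane. The only differences are bookkeeping ones (sweeping from $-\infty$ with a supremum instead of from $+\infty$ with an infimum, and handling the critical position one-sidedly rather than via the paper's two-sided $\nu$, $\nu'$ case distinction), so the proposal is correct and matches the paper's proof in substance.
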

\begin{proof}
We proceed as in \cite{ChS}. Let $v$ be a unique solution of (\ref{extpb}) with boundary data $u$. 

Let $\lambda>0$ and we consider the sets
$$
R_{\lambda}=\{ (x_{1}, \dots, x_{N},y):x_{1}>\lambda , y\geq 0 \}
$$
and
$$
T_{\lambda}=\{(x_{1},\dots, x_{N}, y):x_{1}=\lambda , y\geq 0 \}.
$$
Let $v_{\lambda}(x,y)=v(2\lambda-x_{1},\dots,x_{N},y)$ and $w_{\lambda}=v_{\lambda}-v$.
Then $w_{\lambda}$ satisfies

\begin{equation}\label{eqwl}
\left\{
\begin{array}{ll}
-\dive(y^{1-2s}\nabla w_{\lambda})+m^{2}y^{1-2s} w_{\lambda}=0  &\mbox{ in } \, \R^{N+1}_{+}  \\
\frac{\partial w_{\lambda}}{\partial \nu^{a}}= (C_{\lambda}(x) +m^{2s} -\mu) w_{\lambda}(x,0)  &\mbox{ on } \, \partial \R^{N+1}_{+}
\end{array},
\right.
\end{equation}
where 
\begin{equation}
C_{\lambda}(x) :=
\left\{ 
\begin{array}{cc}
\frac{v_{\lambda}^{p-1}(x,0) -v^{p-1}(x,0)}{v_{\lambda}(x,0)-v(x,0)} &\mbox{ if } v_{\lambda}(x,0)\neq v(x,0) \\
0 &\mbox{ if } v_{\lambda}(x,0)= v(x,0)
\end{array}.
\right.
\end{equation}

Let $w_{\lambda}^{-}:=\min\{0, w_{\lambda}\}$. Note that as $\lambda\rightarrow \infty$, $C_{\lambda}(x) \rightarrow 0$ uniformly for $x$ such that $w_{\lambda}^{-}\neq 0$ because of $\lim_{|x| \rightarrow \infty} |v(x,0)|=0$ and $0\leq v_{\lambda}<v$.
Multiplying the weak formulation of (\ref{eqwl}) by $w_{\lambda}^{-}$ and applying inequality (\ref{SI}), we get for for $\lambda>0$ sufficiently large
\begin{align*}
&\iint_{R_{\lambda}} y^{1-2s} [|\nabla w^{-}_{\lambda} |^{2}+m^{2}(w^{-}_{\lambda})^{2}] dx dy=\int_{\{ x_{1}>\lambda\}} [C_{\lambda}(x) +m^{2s} -\mu] (w^{-}_{\lambda})^{2} dx  \\
&\leq \int_{\{ x_{1}>\lambda\}} C_{\lambda}(x) (w^{-}_{\lambda})^{2} dx+ A(m,\mu,s) \iint_{R_{\lambda}} y^{1-2s} [|\nabla w^{-}_{\lambda} |^{2}+m^{2}(w^{-}_{\lambda})^{2}] dx dy
\end{align*} 
where $A(m,\mu,s)=(1-\frac{\mu}{m^{2s}})$ if $m^{2s}-\mu>0$ and $A(m,\mu,s)=0$ otherwise.
Then $w_{\lambda}^{-}\equiv 0$ on $R_{\lambda}$ and as a consequence 
$$
w_{\lambda}(x,y)\geq 0 \, \mbox{ on } R_{\lambda} 
$$
for $\lambda>0$ sufficiently large.

Now, we define
$$
\nu :=\inf\{ \tau>0 : w_{\lambda}\geq 0 \mbox{ on } R_{\lambda} \mbox{ for every } \lambda \geq \tau \}. 
$$
We distinguish two cases.
We begin assuming $\nu>0$. We want to prove that $w_{\nu}\equiv 0$. We argue by contradiction. 

By continuity, $w_{\nu}\geq 0$ on $R_{\nu}$, and by the strong maximum principle
$w_{\nu}>0$ on the set 
$$
R_{\nu}^{'} :=\{(x_{1}, \dots, x_{N}, y) : x_{1} >\nu, \, x_{i}\in \R \, (i=1, \dots, N)\, y>0\}.
$$

We also have $w_{\nu}(x,0)\geq 0$ on the set $\{x\in \R^{N} : x_{1} \geq \nu\}$ by continuity. 
Furthermore, by Hopf principle $w_{\nu}(x,0)>0$ on the set $\{x\in \R^{N} : x_{1}>\nu\}$. 

Take $\lambda_{j}<\nu$ such that $\lambda_{j}\rightarrow \nu$ as $j\rightarrow \infty$. 
Let $r_{0}>0$ such that $|C_{\nu}(x)|\leq \frac{\mu}{4}$ for every $|x|>r_{0}$. Since $||w_{\lambda_{j}}||_{C^{1}(\R^{N+1}_{+})}$ is uniformly bounded, $D:= |C_{\lambda_{j}}|_{L^{\infty}(\R^{N})}<\infty$ and $|C_{\lambda_{j}}(x)|\leq \frac{\mu}{2}$ for every $|x|>r_{0}$ and $j\in \N$. 

We denote by 
$$
B_{r_{0}}(p_{j})=\{ x\in \R^{N}: |x-p_{j}|<r_{0} \}\subset \R^{N}
$$ 
where $p_{j}=(\lambda_{j}, 0, \dots, 0)$. 
As above, we obtain
\begin{align*}
&\iint_{R_{\lambda_{j}}} y^{1-2s}[|\nabla w_{\lambda_{j}}^{-}|^{2} +m^{2}(w_{\lambda_{j}}^{-})^{2}] dxdy \leq \int_{\{x_{1} >\lambda_{j}\}} (C_{\lambda_{j}} +m^{2s}-\mu)(w_{\lambda_{j}}^{-})^{2} dx \\ \nonumber
&\leq (D+m^{2s}+\mu)\int_{\{x_{1}>\lambda_{j}\}\cap B_{r_{0}}(p_{j})} (w_{\lambda_{j}}^{-})^{2} dx + (m^{2s}-\frac{\mu}{2}) \int_{\{x_{1}>\lambda_{j}\} \setminus B_{r_{0}}(p_{j})} (w_{\lambda_{j}}^{-})^{2} dx \\ \nonumber
&\leq (D+m^{2s}+\mu)\int_{\{x_{1}>\lambda_{j}\}\cap B_{r_{0}}(p_{j})} (w_{\lambda_{j}}^{-})^{2} dx + B(m,\mu,s) \int_{\{x_{1}>\lambda_{j}\} \setminus B_{r_{0}}(p_{j})} (w_{\lambda_{j}}^{-})^{2} dx .  
\end{align*}
where $B(m,\mu,s)=(m^{2s}-\frac{\mu}{2})$ if $m^{2s}-\frac{\mu}{2}>0$ and it is zero otherwise.
Since $w_{\nu}(x)>0$ on the set $\{x\in \R^{N} : x_{1}>\nu\}$, the measure of set $E_{j} =\{x\in B_{r_{0}}(p_{j}): w_{\lambda_{j}}^{-}(x,0)\neq 0 \}$ goes to $0$ as $j\rightarrow \infty$. 

Then using H\"older and Sobolev inequality, we see 
\begin{align*}
\int_{\{x_{1}>\lambda_{j}\}\cap B_{r_{0}}(p_{j})} (w_{\lambda_{j}}^{-})^{2} dx
&= \int_{\{x_{1}>\lambda_{j}\}} \chi_{E_{j}} (w_{\lambda_{j}}^{-})^{2} dx  \\
&\leq |\chi_{E_{j}}|_{L^{N/2s}}  |w_{\lambda_{j}}^{-}|^{2}_{L^{2N/N-2s}} \\
&\leq  o(1)\int_{R_{\lambda_{j}}} |\nabla w_{\lambda_{j}}^{-}|^{2} dxdy. 
\end{align*}

Therefore $w_{\lambda_{j}}\geq 0$ on $R_{\lambda_{j}}$, if $j$ is large. This gives a contradiction because of the minimality of $\nu$. Thus we can conclude that $w_{\nu}\equiv 0$ on $R_{\nu}$ and we get the symmetry with respect to the $x_{1}$ direction. 

Now assume $\nu=0$. By repeating the above argument for $\lambda<0$ and $w_{\lambda}:= v_{\lambda}-v$ defined on 
$$
L_{\lambda} :=\{ (x_{1}, \dots, x_{N}, y) \in \R^{N+1}_{+} : x_{1}<\lambda, \, x_{i}\in \R \, (i=1, \dots, n), \, y\geq 0	\}. 
$$ 
Then $w_{\lambda}\geq 0$ for $|\lambda|$ sufficiently large. 
Let  
$$
\nu' := \sup\{\tau <0 : W_{\lambda} \geq 0 \mbox{ on } L_{\lambda} \mbox{ for every } \lambda\leq \tau \}. 
$$
If $\nu'<0$, we get the symmetry as above. If $\nu'=0$, by using $\nu=0$ we have
$$
v(-x_{1}, x_{2}, \dots, x_{N}, y)\geq v(x_{1}, x_{2}, \dots, x_{N}, y) \mbox{ on } \R^{N+1}_{+}. 
$$
Consequently, by replacing $x_{1}$ with $-x_{1}$ we deduce that  
$$
v(-x_{1}, x_{2}, \dots, x_{N}, y)= v(x_{1}, x_{2}, \dots, x_{N}, y) \mbox{ on } \R^{N+1}_{+}. 
$$

Using the same approach in any arbitrary direction $x_{i}$, we conclude the proof. 

\end{proof}

\section{passage to the limit as $m \rightarrow 0$}

In this section we show that it is possible to pass to the limit in problem (\ref{fracvaldinoci}) and to find a nontrivial ground state to (\ref{DPV}).
In order to prove this, we estimate $c_{m}$ defined in Section $1$ from above and below 
uniformly in $m$.

Fix $0<m<(\frac{\mu}{2})^{1/2s}$.
By using the characterization of the infimum $c_{m}$ on $\mathcal{N}_{m}$ we can see that
$$
c_{m}=\inf_{v\in \mathcal{N}_{m}} \I_{m}(v)=\inf_{v\in X_{m,rad}\setminus \{0\}} \max_{t>0} \I_{m}(tv).
$$
Now, we prove that there exists $\lambda>0$ and $\delta>0$ independent from $m$ such that
\begin{equation}\label{ub}
\lambda \leq c_{m} \leq \delta.
\end{equation}
Let 
$$
w(x,y)=v_{0}(x) \frac{1}{y+1}
$$
where $v_{0}$ is defined by setting
\begin{equation}\label{vdef}
v_{0}(x)=
\left\{
\begin{array}{ll}
1 &\mbox{ if } |x|\leq 1 \\
2-|x| &\mbox{ if } 1\leq |x|\leq 2 \\ 
0 &\mbox{ if } |x|\geq 2 
\end{array}.
\right.
\end{equation}

Then $w\in H^{1}_{m}(\R^{N+1}_{+},y^{1-2s})$ and 
\begin{align}
||w||^{2}_{H^{1}_{m}(\R^{N+1}_{+},y^{1-2s})}&=\Bigl(\int_{0}^{\infty} y^{1-2s} \frac{1}{(y+1)^{2}} dy \Bigr) \Bigl[\int_{\R^{N}} |\nabla_{x}v_{0}|^{2} +m^{2}v^{2}_{0} dx \Bigr] \nonumber \\
&+\Bigl(\int_{0}^{\infty} y^{1-2s} \frac{1}{(y+1)^{4}} dy \Bigr) \int_{\R^{N}} v^{2}_{0} dx \nonumber \\
&\leq A \Bigl[\int_{\R^{N}} |\nabla_{x}v_{0}|^{2} +\Bigl(\frac{\mu}{2}\Bigr)^{1/s}v^{2}_{0} dx \Bigr]+B \int_{\R^{N}} v^{2}_{0} dx=:C.
\end{align}

Hence we have
\begin{align*}
\sup_{t>0} \I_{m}(tw) &=\Bigl(\frac{1}{2}-\frac{1}{p} \Bigr) \frac{||w||_{e,m}^{\frac{p}{p-2}}}{|w|_{p}^{\frac{2}{p-2}}} \\
&\leq \Bigl(\frac{1}{2}-\frac{1}{p} \Bigr) \frac{[||w||_{H^{1}_{m}(\R^{N+1}_{+},y^{1-2s})}^{2}+\mu|v_{0}|^{2}_{L^{2}(\R^{N})}]^{\frac{p}{p-2}}}{|v_{0}|_{L^{p}(\R^{N})}^{\frac{2}{p-2}}} \\
& \leq \Bigl(\frac{1}{2}-\frac{1}{p} \Bigr) \frac{[C+\mu|v_{0}|_{L^{2}(\R^{N})}^{2}]^{\frac{p}{p-2}}}{|v_{0}|_{L^{p}(\R^{N})}^{\frac{2}{p-2}}}=:\delta 
\end{align*}
Since $\I_{m}(v_{m})=c_{m}$ and $\J_{m}(v_{m})=0$ we know that
$$
c_{m}=\I_{m}(v_{m})=\Bigl(\frac{1}{2}-\frac{1}{p} \Bigr) \int_{\R^{N}} |v_{m}|^{p} dx.
$$
Then to deduce a lower bound to $c_{m}$ it is enough to estimate the $L^{p}(\R^{N})$ norm of $v_{m}$.
Since $\J_{m}(v_{m})=0$ we can see that
\begin{align*}
|v_{m}|_{L^{p}(\R^{N})}^{p}&=\iint_{\R^{N+1}_{+}} y^{1-2s} (|\nabla v_{m}|^{2}+m^{2} v_{m}^{2}) dx dy+(\mu-m^{2s})  \int_{\R^{N}} v_{m}^{2} dx \\
&\geq \iint_{\R^{N+1}_{+}} y^{1-2s} |\nabla v_{m}|^{2} dx dy+\frac{\mu}{2}|v_{m}|_{L^{2}(\R^{N})}^{2} \\
&\geq \kappa_{s}[v_{m}]^{2}_{H^{s}(\R^{N})}+\frac{\mu}{2}|v_{m}|_{L^{2}(\R^{N})}^{2} \\
&\geq C_{s,\mu} |v_{m}|^{2}_{H^{s}(\R^{N})}\geq C'_{s,\mu}|v_{m}|_{L^{p}(\R^{N})}^{2}
\end{align*}
that is 
\begin{align}\label{nozero} 
|v_{m}|_{L^{p}(\R^{N})} \geq (C'_{s,\mu})^{\frac{1}{p-2}} \mbox{ and }  c_{m}\geq \Bigl(\frac{1}{2}-\frac{1}{p} \Bigr)(C'_{s,\mu})^{\frac{p}{p-2}}=:\lambda. 
\end{align}

Now, by using (\ref{ub}), we are able to prove the following result

\begin{thm}
There exists $v\in H^{1}_{loc}(\R^{N+1}_{+},y^{1-2s})$ such that
$v_{m} \rightharpoonup v$ in $L^{2}(\R^{N} \times [0,\varepsilon],y^{1-2s})$ for any $\varepsilon>0$, $\nabla v_{m} \rightharpoonup \nabla v$ in $L^{2}(\R^{N+1}_{+},y^{1-2s})$ and $v_{m}(\cdot,0) \rightarrow v(\cdot,0)$ in $L^{q}(\R^{N})$ for any $q\in (2, \frac{2N}{N-2s})$, as $m\rightarrow 0$. In particular $v(\cdot,0)$ is a nontrivial weak solution to (\ref{DPV}). 
\end{thm}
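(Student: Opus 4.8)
The plan is to exploit the uniform bounds \eqref{ub} on $c_m$ together with the structure of the Nehari manifold to extract a limit from the family $(v_m)$ of ground states as $m\to0$. First I would observe that, since $v_m\in\mathcal{N}_m$ and $\I_m(v_m)=c_m\le\delta$, the identity $c_m=\bigl(\tfrac12-\tfrac1p\bigr)\int_{\R^N}|v_m|^p\,dx$ gives a uniform bound on $|v_m|_{L^p(\R^N)}$, and then $\J_m(v_m)=0$ together with the choice $0<m<(\mu/2)^{1/2s}$ (so that $\mu-m^{2s}>\mu/2>0$) yields
$$
\iint_{\R^{N+1}_+}y^{1-2s}|\nabla v_m|^2\,dx\,dy+\tfrac{\mu}{2}|v_m|_{L^2(\R^N)}^2\le|v_m|_{L^p(\R^N)}^p\le C,
$$
uniformly in $m$. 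Hence $\nabla v_m$ is bounded in $L^2(\R^{N+1}_+,y^{1-2s})$, $v_m(\cdot,0)$ is bounded in $L^2(\R^N)$ and (via \eqref{SI}) in $H^s(\R^N)$, and for each fixed $\varepsilon>0$ the trace inequality plus the elliptic equation control $v_m$ in $L^2(\R^N\times[0,\varepsilon],y^{1-2s})$. I would then pass to a subsequence to obtain the three stated weak/strong convergences: $v_m\rightharpoonup v$ in $L^2(\R^N\times[0,\varepsilon],y^{1-2s})$, $\nabla v_m\rightharpoonup\nabla v$ in $L^2(\R^{N+1}_+,y^{1-2s})$, and $v_m(\cdot,0)\rightharpoonup v(\cdot,0)$ in $H^s(\R^N)$; compactness of the radial embedding (Theorem \ref{compactradial}, applied on balls or via the radial symmetry of $v_m$ established in Theorem 1) upgrades the trace convergence to strong convergence in $L^q(\R^N)$ for $q\in(2,\tfrac{2N}{N-2s})$.

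Next I would show $v(\cdot,0)\not\equiv0$. This is immediate from the strong $L^p$ convergence of the traces: $|v_m(\cdot,0)|_{L^p(\R^N)}\ge(C'_{s,\mu})^{1/(p-2)}>0$ by \eqref{nozero}, and $p\in(2,\tfrac{2N}{N-2s})$ lies in the range where the convergence is strong, so $|v(\cdot,0)|_{L^p(\R^N)}\ge(C'_{s,\mu})^{1/(p-2)}>0$.

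The core step is passing to the limit in the weak formulation. Each $v_m$ satisfies
$$
\iint_{\R^{N+1}_+}y^{1-2s}\bigl(\nabla v_m\nabla\varphi+m^2 v_m\varphi\bigr)\,dx\,dy=\int_{\R^N}\bigl(m^{2s}u_m-\mu u_m+|u_m|^{p-2}u_m\bigr)\varphi(\cdot,0)\,dx
$$
for $\varphi\in C^\infty_c(\overline{\R^{N+1}_+})$, where $u_m=v_m(\cdot,0)$. For fixed $\varphi$ with support in $\R^N\times[0,R]$: the gradient term passes by weak convergence of $\nabla v_m$; the term $m^2\iint y^{1-2s}v_m\varphi$ tends to $0$ because $v_m$ is bounded in $L^2(\R^N\times[0,R],y^{1-2s})$ and the factor $m^2\to0$; on the boundary, $m^{2s}\int u_m\varphi(\cdot,0)\to0$ since $m^{2s}\to0$ and $u_m$ is bounded in $L^2$, $-\mu\int u_m\varphi(\cdot,0)\to-\mu\int u\,\varphi(\cdot,0)$ by weak $L^2$ convergence, and $\int|u_m|^{p-2}u_m\varphi(\cdot,0)\to\int|u|^{p-2}u\,\varphi(\cdot,0)$ by the strong $L^p$ convergence (writing $||u_m|^{p-2}u_m|_{L^{p/(p-1)}}=|u_m|_{L^p}^{p-1}$ and using that $u_m\to u$ in $L^p$ on the support of $\varphi$). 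One also checks $v$ satisfies the homogeneous degenerate equation $-\dive(y^{1-2s}\nabla v)=0$ in $\R^{N+1}_+$ by the same argument with $\varphi\in C^\infty_c(\R^{N+1}_+)$. Thus $v$ is, up to the constant $\kappa_s=1$, the Caffarelli–Silvestre extension of $u:=v(\cdot,0)$ for the operator $(-\Delta)^s$, and the Neumann condition reads $-\lim_{y\to0}y^{1-2s}\partial_y v=-\mu u+|u|^{p-2}u$, i.e. $(-\Delta)^su+\mu u=|u|^{p-2}u$ in the weak sense; together with $u\in H^s(\R^N)$ and $u\not\equiv0$ this shows $u$ is a nontrivial weak solution to \eqref{DPV}.

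The main obstacle is the boundary term $\int_{\R^N}|u_m|^{p-2}u_m\varphi(\cdot,0)\,dx$: controlling it requires the \emph{strong} $L^p$ convergence of the traces, which in turn relies on the radial symmetry of the minimizers (so that the compact embedding of Theorem \ref{compactradial} applies) and on the uniform-in-$m$ $H^s$ bound; one must be careful that $p$ stays strictly below the critical exponent $\tfrac{2N}{N-2s}$. A secondary subtlety is that the natural energy space degenerates as $m\to0$ — the mass terms $m^2 v^2$ and $m^{2s}u^2$ disappear — so the limiting object lives only in $\dot H^1(\R^{N+1}_+,y^{1-2s})$ with trace in $H^s(\R^N)$, and one should phrase the convergence of $v_m$ locally in $y$ (on slabs $\R^N\times[0,\varepsilon]$) as stated, rather than in the full $m$-dependent norm.
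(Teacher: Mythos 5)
Your proposal is correct and follows essentially the same route as the paper: uniform bounds from $c_{m}\leq \delta$ and $\J_{m}(v_{m})=0$, an $L^{2}(\R^{N}\times[0,\varepsilon],y^{1-2s})$ slab estimate, weak convergence plus compactness of the radial traces in $L^{q}$, term-by-term passage to the limit in the weak formulation, and nontriviality from the lower bound (\ref{nozero}). The only cosmetic differences are that the paper obtains the slab bound via the fundamental theorem of calculus in $y$ and H\"older (its inequality (\ref{nash})) rather than ``the elliptic equation'', and it inserts a cutoff $\psi_{R}$ in $y$ before letting $R\rightarrow\infty$ instead of testing directly with functions compactly supported in $y$; both choices are equivalent.
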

\begin{proof}
Taking into account $\J(v_{m})=0$, $c_{m}\leq \delta$ and $0<m<(\frac{\mu}{2})^{1/2s}$ we can see that 
\begin{align}
\delta^{1/p}\Bigl(\frac{1}{2}-\frac{1}{p}\Bigr)^{-1/p}& \geq 
c_{m}^{1/p}\Bigl(\frac{1}{2}-\frac{1}{p}\Bigr)^{-1/p}=|v_{m}|_{L^{p}(\R^{N})}^{p} \nonumber \\
&=\iint_{\R^{N+1}_{+}} y^{1-2s} (|\nabla v_{m}|^{2}+m^{2} v_{m}^{2}) dx dy+(\mu-m^{2s})  \int_{\R^{N}} v^{2} dx  \nonumber \\
&\geq \iint_{\R^{N+1}_{+}} y^{1-2s} |\nabla v_{m}|^{2} dx dy+\frac{\mu}{2}|v_{m}|_{L^{2}(\R^{N})}^{2} \nonumber \\
&\geq \kappa_{s}[v_{m}]^{2}_{H^{s}(\R^{N})}+\frac{\mu}{2}|v_{m}|_{L^{2}(\R^{N})}^{2} \nonumber \\
&\geq C_{s,\mu} |v_{m}|^{2}_{H^{s}(\R^{N})}.
\end{align}
that is 
$$
\iint_{\R^{N+1}_{+}} y^{1-2s} |\nabla v_{m}|^{2} dx dy\leq C_{1}
$$
and
$$
 |v_{m}|^{2}_{H^{s}(\R^{N})} \leq C_{2}.
$$
Now, fix $\varepsilon>0$ and let $v\in \mathcal{C}^{\infty}_{c}(\overline{\R^{N+1}_{+}})$ such that $||v||_{H^{1}_{m}(\R^{N+1}_{+},y^{1-2s})}<\infty$.
For any $x\in \R^{N}$ and $y\in [0, \varepsilon]$, we have
$$
v(x,y)=v(x,0)+\int_{0}^{y} \partial_{y} v(x,t) dt.
$$
By using $(a+b)^{2}\leq 2a^{2}+2b^{2}$ for all $a, b\geq 0$ we obtain
$$
|v(x,y)|^2 \leq 2  |v(x,0)|^{2}+2\Bigl(\int_{0}^{y}|\partial_{y} v(x,t)| dt\Bigr)^{2},
$$
and by applying the H\"older inequality we deduce
\begin{equation*}
|v(x,y)|^2 \leq 2 \Bigl[ |v(x,0)|^{2}+\Bigl(\int_{0}^{y} t^{1-2s}|\partial_{y} v(x,t)|^{2}dt\Bigr)\frac{y^{2s}}{2s}\,  \Bigr].
\end{equation*}
Multiplying both members by $y^{1-2s}$ we have
\begin{equation}\label{vtii}
y^{1-2s}|v(x,y)|^2 \leq 2 \Bigl[ y^{1-2s}|v(x,0)|^{2}+\Bigl(\int_{0}^{y} t^{1-2s} |\partial_{y} v(x,t)|^{2}dt\Bigr)\frac{y}{2s} \Bigr].
\end{equation}
Integrating (\ref{vtii}) over $\R^{N}\times [0,\varepsilon]$ we have
\begin{align}\label{nash}
||v||_{L^{2}(\R^{N}\times [0,\varepsilon],y^{1-2s})}^2 &\leq \frac{\varepsilon^{2-2s}}{1-s} |v(\cdot,0)|_{L^{2}(\R^{N})}^{2}+\frac{\varepsilon^{2}}{2s} ||\partial_{y} v||_{L^{2}(\R^{N+1}_{+},y^{1-2s})}^{2}.
\end{align}
By density (\ref{nash}) holds for any $v\in H^{1}_{m}(\R^{N+1}_{+},y^{1-2s})$.

Then, replacing $v$ by $v_{m}$ we can infer that 
\begin{align*}
||v_{m}||_{L^{2}(\R^{N}\times [0,\varepsilon],y^{1-2s})}^2 \leq C(\varepsilon,s) K(\delta,p)^{2}
\end{align*}
for any $0<m<(\frac{\mu}{2})^{1/2s}$. Then there exists $v\in H^{1}_{loc}(\R^{N+1}_{+},y^{1-2s})$ such that 
\begin{align}
&v_{m} \rightharpoonup v \mbox{ in } L^{2}(\R^{N} \times [0,\varepsilon],y^{1-2s}) \label{5.8} \mbox{ for all } \varepsilon>0 \\
&\nabla v_{m} \rightharpoonup \nabla v \mbox{ in } L^{2}(\R^{N+1}_{+},y^{1-2s}) \label{5.9}\\
&v_{m} (\cdot, 0) \rightarrow v(\cdot, 0) \mbox{ in } L^{q}(\R^{N}) \quad \forall q\in \Bigl(2, \frac{2N}{N-2s} \Bigr) \label{5.10}  
\end{align}
as $m \rightarrow 0$.
Finally, we prove that $v(\cdot,0)$ is a nontrivial weak solution to (\ref{DPV}). 
We proceed as in \cite{A2}. 
Fix $\eta\in \mathcal{C}^{\infty}_{c}(\overline{\R^{N+1}_{+}})$ such that $\nabla \eta\in L^{2}(\R^{N+1}_{+},y^{1-2s})$ and let $\psi\in C^{\infty}([0,\infty))$ defined by 
\begin{equation}\label{xidef}
\left\{
\begin{array}{ll}
\psi=1 &\mbox{ if } 0\leq y\leq 1 \\
0\leq \psi \leq 1 &\mbox{ if } 1\leq y\leq 2 \\ 
\psi=0 &\mbox{ if } y\geq 2 .
\end{array}
\right.
\end{equation}
Let $\psi_{R}(y)=\psi(\frac{y}{R})$ for $R>1$; then $\eta \psi_{R} \in H^{1}_{m}(\R^{N+1}_{+}, y^{1-2s})$. 

Then putting $\eta \psi_{R}$ in the weak formulation of (\ref{fracvaldinoci}) we have
\begin{align*}
&\iint_{\R^{N+1}_{+}} y^{1-2s} [\nabla v_{m} \nabla(\eta \psi_{R})+m^{2} v_{m} \eta\psi_{R}] dx dy\\
&+(\mu-m^{2s})  \int_{\R^{N}} v_{m}\eta dx=\int_{\R^{N}} |v_{m}|^{p-1}\eta dx.
\end{align*}
Taking the limit as $m\rightarrow 0$ and by using  (\ref{5.8})-(\ref{5.10}) we find
\begin{align*}
\iint_{\R^{N+1}_{+}} y^{1-2s} \nabla v \nabla(\eta \psi_{R}) dx dy+\mu  \int_{\R^{N}} v\eta dx=\int_{\R^{N}} |v|^{p-1}\eta dx.
\end{align*}
By passing to the limit as $R \rightarrow \infty$ we deduce that
\begin{align*}
\iint_{\R^{N+1}_{+}} y^{1-2s} \nabla v \nabla\eta  dx dy+\mu  \int_{\R^{N}} v\eta dx=\int_{\R^{N}} |v|^{p-1}\eta dx
\end{align*}
for any $\eta\in \mathcal{C}^{\infty}_{c}(\overline{\R^{N+1}_{+}})$ such that $\nabla \eta\in L^{2}(\R^{N+1}_{+},y^{1-2s})$.
Finally $v(\cdot,0)$ is not identically zero because of  (\ref{nozero}), (\ref{5.10}) and $2<p<\frac{2N}{N-2s}$.
Proceeding as in \cite{FQT} we can show that $v$ is a positive radially symmetric function.
\end{proof}

\end{document}